\newtheorem{theorem}{Theorem}[section]
\newtheorem{lemma}[theorem]{Lemma}
\newtheorem{proposition}[theorem]{Proposition}
\newtheorem{corollary}[theorem]{Corollary}
\theoremstyle{definition}
\newtheorem{definition}[theorem]{Definition}
\newtheorem{example}[theorem]{Example}
\newtheorem*{Theorem}{Theorem}
\theoremstyle{remark}
\newtheorem{remark}[theorem]{Remark}
\newcommand{\C}{\mathcal C}
\newcommand{\M}{\mathcal M}
\newcommand{\SE}{\mathcal S}
\newcommand{\p}{\mathfrak p}
\newcommand{\q}{\mathfrak q}
\newcommand{\m}{\mathfrak m}
\newcommand{\MP}{\mathbb M}
\newcommand{\Spec}{\mathrm {Spec}}
\newcommand{\Max}{\mathrm {Max}}
\newcommand{\Ass}{\mathrm {Ass}}
\newcommand{\Min}{\mathrm {Min}}
\newcommand{\Hom}{\mathrm {Hom}}
\newcommand{\Ext}{\mathrm {Ext}}
\newcommand{\Supp}{\mathrm {Supp}}
\newcommand{\height}{\mathrm {ht}}
\newcommand{\RMod}{R\text{-}\mathrm{Mod}}
\newcommand{\G}{\varGamma }
\newcommand{\FG}{\SE_{f.g.} }
\newcommand{\FL}{\SE_{f. \ell.} }
\newcommand{\AR}{\M_{Artin } }
\newcommand{\FS}{\M_{f.s.} }
\newcommand{\rmid}[1]{\mathrel{}\middle#1\mathrel{}}
\begin{document}
\title[Melkersson conditions with respect to a prime ideal]{Melkersson conditions with respect to a prime ideal} 
\author{Takeshi Yoshizawa} 
\address{National Institute of Technology, Toyota College 2-1 Eiseicho, Toyota, Aichi, Japan, 471-8525 }
\email{tyoshiza@toyota-ct.ac.jp}
\subjclass[2010]{Primary 13C60, 13D45} 
\keywords{Serre subcategory, Melkersson subcategory}
%
%
%
%
%
%
\begin{abstract}
Aghapournahr and Melkersson introduced the notion of Melkersson condition on a Serre subcategory of the module category over a commutative noetherian ring.
This paper investigates the structure of set of prime ideals satisfying a Melkersson condition on a Serre subcategory. 
We try to calculate members of a set of these prime ideals for a subcategory consisting of extension modules in two given Serre subcategories of the module category.
Meanwhile, we classify the structure of set of prime ideals satisfying a Melkersson condition over a $0$-dimensional ring, a $1$-dimensional local ring, and a $2$-dimensional local domain. 
\end{abstract}
%
%
%
\maketitle
%
%
%
\section*{Introduction} 
Let $R$ be a commutative noetherian ring. 
We denote by $\RMod$ the category consisting of $R$-modules and let $\SE$ be a Serre subcategory of $\RMod$. 
In 1962, Gabriel \cite{G-1962} classified Serre subcategories in the category consisting of finitely generated $R$-modules. 
His classification theorem also gave a bijective correspondence between the set of Serre subcategories in $\RMod$ with the closedness of taking arbitrary direct sums and the set of specialization closed subsets of $\Spec(R)$. 
However, the classification problem for Serre subcategories in $\RMod$ has still been investigated. 

The main purpose of this paper is to study Serre subcategories which are not classified by Gabriel's classification theorem, namely, Serre subcategories with a weaker condition than the closedness of taking arbitrary direct sums. 
It is a well-known fact that the closedness of taking arbitrary direct sums on a Serre subcategory implies the closedness of taking injective hulls. 
When studying the closedness of taking injective hulls, 
the difficulty is that this closedness changes depending on the dimension of ring.  
For instance, in 1992, Belshoff and Xu \cite{BX-1992} showed that the Serre subcategory $\SE_{ref}$  consisting of Matlis reflexive modules over a local ring has the closedness of taking injective hulls if and only if the ring is complete with dimension at most one. 
Meanwhile, in 2008, Aghapournahr and Melkersson \cite{AM-2008} studied the question of when local cohomology modules belong to $\SE$. 
They gave the answer when $\SE$ has the closedness of taking injective hulls, more generally, it satisfies the following condition: 
\begin{center}
($C_{I}$) \hspace{5pt}  If $\G_{I}(M)=M$ and $(0 :_{M} I)$ is in $\mathcal{S}$, then $M$ is in $\mathcal{S}$ 
\end{center}
for an ideal $I$ of $R$ and an $R$-module $M$. 
After of this, the above condition is called the Melkersson condition with respect to an ideal, and the Melkersson subcategory is defined as a special Serre subcategory which satisfies Melkersson conditions with respect to all ideals. 
For example, a Serre subcategory with the closedness of taking injective hulls is a Melkersson subcategory. 
In general, however, the converse implication does not hold. (See \cite{Y-2016}.)

This paper studies the problem of whether a Serre subcategory satisfies a Melkersson condition. 
It is known that a Serre subcategory satisfies the Melkersson condition $(C_{I})$ if it satisfies Melkersson conditions $(C_{\p})$ for all minimal prime ideals $\p$ of $I$. 
Therefore, we consider and investigate the following subset of $\Spec(R)$ for $\SE$: 
\[ \MP[\SE]=\{ \p \in \Spec(R) \mid \SE \text{ satisfies the Melkersson condition $(C_{\p})$ } \}. \]
Note that $\SE$ is a Melkersson subcategory if and only if one has $\MP[\SE]=\Spec(R)$.  
Especially, it can not be represented $\SE$ with $\MP[\SE] \not =\Spec(R)$ as a Serre subcategory which is given by Gabriel's classification theorem. 
Several significant Serre subcategories are represented as extension subcategory $\FG*\SE$ where $\FG$ is the finitely generated $R$-modules subcategory and $\SE$ is a Serre subcategory with the closedness of taking injective hulls. (e.g. $\FG$, the subcategory consisting of FSF modules, and  the subcategory consisting of Minimax modules. In particular, the subcategory $\SE_{ref}$ over a complete local ring.) 
Therefore, we focus on $\MP[\FG*\SE]$ in this paper, and our first main result is given as follows.

\begin{Theorem}
Let $\SE$ be a Serre subcategory of $\RMod$. 
Then one has 
\[ \{ \p \in \MP\left[ \FG*\SE \right] \mid \height \, \p \geqq 1 \} \subseteqq \{ \p \in \Supp_{R}(\SE) \mid \height\, \p \geqq 1 \}, \]
Moreover, if $\SE$ is closed under taking injective hulls, then one has 
\[ \{ \p \in \MP\left[ \FG*\SE \right] \mid \height \, \p \geqq 1 \} = \{ \p \in \Supp_{R}(\SE) \mid \height\, \p \geqq 1 \}. \]
\end{Theorem}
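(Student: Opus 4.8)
The plan is to prove the two inclusions separately, reducing each to the behaviour of one explicitly chosen module and using two simple facts. First, a Serre subcategory closed under injective hulls is a Melkersson subcategory (as recalled in the Introduction), hence satisfies $(C_{\p})$ for every prime; I will use this for the equality. Second, I record the elementary remark that if $\p\notin\Supp_{R}(\SE)$ and $M\in\FG*\SE$, then $M_{\p}$ is finitely generated over $R_{\p}$: writing $0\to A\to M\to B\to 0$ with $A\in\FG$ and $B\in\SE$, one has $\Supp_{R}(B)\subseteq\Supp_{R}(\SE)$, so $B_{\p}=0$ and $M_{\p}\cong A_{\p}$.

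For the inclusion ``$\subseteqq$'' I argue contrapositively. Fix $\p$ with $\height\,\p\geqq 1$ and $\p\notin\Supp_{R}(\SE)$; I must produce a $\p$-torsion module $M$ with $(0:_{M}\p)\in\FG*\SE$ but $M\notin\FG*\SE$, so that $\FG*\SE$ fails $(C_{\p})$. Since $\height\,\p\geqq 1$ there is a prime $\q\subsetneqq\p$ with $\height(\p/\q)=1$; then $R/\q$ is a domain and $D:=R_{\p}/\q R_{\p}$ is a one-dimensional local domain. Choosing $f\in\p\setminus\q$, its image $\bar f$ is a nonzero nonunit, so $\p/\q$ is minimal over $(\bar f)$ and $\sqrt{(\bar f)D}$ is the maximal ideal of $D$. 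Put
\[ M=\G_{\p}\!\left((R/\q)_{f}\big/(R/\q)\right). \]
By construction $M$ is $\p$-torsion. Its submodule $(0:_{M}\p)$ lies inside $(0:_{(R/\q)_{f}/(R/\q)}\bar f)\cong (R/\q)/\bar f(R/\q)$, which is finitely generated, so $(0:_{M}\p)\in\FG\subseteq\FG*\SE$. On the other hand, since $\p$-torsion commutes with localization at $\p$ and $\sqrt{(\bar f)D}$ is the maximal ideal of $D$, one computes $M_{\p}\cong\big((R/\q)_{f}/(R/\q)\big)_{\p}\cong\mathrm{Frac}(D)/D$, which is not finitely generated over $R_{\p}$ (otherwise $\mathrm{Frac}(D)$ would be module-finite over the one-dimensional domain $D$). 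By the remark above $M\notin\FG*\SE$, as required.

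For the reverse inclusion assume $\SE$ is closed under injective hulls and $\p\in\Supp_{R}(\SE)$. I first note that $E_{R}(R/\p')\in\SE$ for every $\p'\in V(\p)$: choosing $N\in\SE$ with $N_{\p}\neq 0$ yields $\q\in\Ass_{R}(N)$ with $\q\subseteq\p$, so $R/\q\hookrightarrow N$ gives $R/\q\in\SE$; its quotient $R/\p'$ (for any $\p'\supseteq\p$) then lies in $\SE$, and hence so does $E_{R}(R/\p')$. Now let $M$ be $\p$-torsion with $L:=(0:_{M}\p)\in\FG*\SE$, and write $0\to A\to L\to B\to 0$ with $A\in\FG$, $B\in\SE$. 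Since $\p A=0$, $A$ is a finitely generated $R/\p$-module with $\Ass_{R}(A)\subseteq V(\p)$, so $E_{R}(A)$ is a finite direct sum of modules $E_{R}(R/\p')$ with $\p'\in V(\p)$; thus $E_{R}(A)\in\SE$ and therefore $A\in\SE$. Consequently $L\in\SE$, and as $\SE$ satisfies $(C_{\p})$ we obtain $M\in\SE\subseteq\FG*\SE$. Hence $\FG*\SE$ satisfies $(C_{\p})$, and together with the first inclusion this gives the asserted equality on primes of height $\geqq 1$. (This direction needs no restriction on $\height\,\p$.)

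The main obstacle is the first inclusion. The obvious test module $E_{R}(R/\p)$ is useless for non-maximal $\p$, because its $\p$-socle is $\kappa(\p)$, which is not finitely generated; the key device is to descend to the quotient domain $R/\q$, in which $\p$ becomes a height-one prime, thereby reducing the construction to the essentially principal module $(R/\q)_{f}/(R/\q)$, whose socle and localization at $\p$ can both be controlled. Verifying that $\p$-torsion commutes with localization at $\p$ and that the radical identity $\sqrt{(\bar f)D}=\p D$ forces $M_{\p}\cong\mathrm{Frac}(D)/D$ are the technical points to be checked carefully.
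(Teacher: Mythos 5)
Your proposal is correct, and for the first containment it takes a genuinely different route from the paper. The paper proves $\MP[\FG*\SE]_{\geqq 1}\subseteqq\Supp_{R}(\SE)_{\geqq 1}$ via local cohomology: since $\Ext^{i}_{R}(R/\p,R)\in\FG$ for all $i$, the criterion of \cite{AM-2008} forces $H^{\height\,\p}_{\p}(R)\in\FG*\SE$ whenever $(C_{\p})$ holds, and a separate lemma (flat base change, Grothendieck's vanishing and non-vanishing theorems, and Nakayama's lemma) shows this is impossible when $\height\,\p>0$ and $\p\notin\Supp_{R}(\SE)$. You instead exhibit an explicit witness to the failure of $(C_{\p})$, namely $M=\G_{\p}\bigl((R/\q)_{f}/(R/\q)\bigr)$ for a prime $\q\subsetneqq\p$ with $\height(\p/\q)=1$ and $f\in\p\setminus\q$; your computations $(0:_{M}\p)\subseteqq(0:\bar f)\cong(R/\q)/\bar f(R/\q)$ and $M_{\p}\cong \mathrm{Frac}(D)/D$ with $D=R_{\p}/\q R_{\p}$ a one-dimensional local domain are correct, as is the reduction ``$\p\notin\Supp_{R}(\SE)$ implies every member of $\FG*\SE$ has finitely generated localization at $\p$.'' Your construction is more elementary and self-contained (no appeal to the local cohomology criterion or to Grothendieck's theorems), whereas the paper's formulation in terms of $H^{\height\,\p}_{\p}(R)$ is reused later (Proposition \ref{2-dim-Minimax-cof}), so it earns its keep elsewhere. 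For the reverse containment your argument is essentially the paper's: both reduce to showing $R/\q'\in\SE$ for $\q'\in V(\p)\subseteqq\Supp_{R}(\SE)$ and exploit the decomposition of $E_{R}(F)$ into finitely many indecomposable injectives; the only difference is that you conclude $(0:_{X}\p)\in\SE$ and then apply $(C_{\p})$ for $\SE$ itself (legitimate, since closure under injective hulls makes $\SE$ Melkersson), while the paper instead bounds $E_{R}(X)$ by the direct summand relation inside $E_{R}(F)\oplus E_{R}(M)\in\SE$. Both are valid.
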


As a results of this, we completely characterize the property of Melkersson subcategory for a Serre subcategory $\FG*\SE$ where $\SE$ is a Serre subcategory with the closedness of taking injective hulls. (Theorem \ref{NS-M}.)
Furthermore, we mention the existence of injection from a set of specialization closed subsets of $\Spec(R)$  to a set of Serre subcategories containing $\FG$, and observe the relationship between this injection and $\MP[\SE]$. (Remark \ref{the exsistence of injection}.)
Besides, we decide the structure of $\MP[\SE]$ over rings with a small dimension. 
The first characterization is that every Serre subcategory $\SE$ of $\RMod$ has $\MP[\SE]=\Spec(R)$ if and only if $R$ is a $0$-dimensional ring. 
The second characterization is as follows:  
\begin{Theorem}
Let $R$ be a local ring. 
Then the following conditions are equivalent: 
\begin{enumerate}
\item\, One of the following conditions holds for each Serre subcategory $\SE$ of $\RMod$: 
\begin{enumerate}
\item\, $\MP[\SE]=\Spec(R)$; 

\item\, $\MP[\SE]=\{ \p \}$ for a minimal prime ideal $\p$ of $R$; 

\item\, $\MP[\SE] =\emptyset$. 
\end{enumerate}

\item\, $R$ has a dimension at most one. 
\end{enumerate}
\end{Theorem}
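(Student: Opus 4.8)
The plan is to prove the two implications separately, exploiting throughout that once $\dim R\le 1$ the maximal ideal $\m$ is the only prime of height $\ge 1$, so that $\MP[\SE]\cap\{\q\mid\height\q\ge 1\}$ is forced to be either $\{\m\}$ or $\emptyset$. The hinge of everything is the following dichotomy: each of the three admissible shapes (a), (b), (c) has the property that $\MP[\SE]\cap\{\q\mid\height\q\ge 1\}$ is \emph{all} of $\{\q\mid\height\q\ge 1\}$ (case (a)) or \emph{none} of it (cases (b), (c)). A Serre subcategory violates the trichotomy as soon as this intersection is a nonempty proper subset.

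For $(1)\Rightarrow(2)$ I would argue contrapositively: assuming $\dim R\ge 2$ I produce such a violator by feeding an injective-hull-closed subcategory into the first Theorem above. Choose a minimal prime $\p_0$ with $\dim R/\p_0=\dim R\ge 2$; as $R/\p_0$ is a local domain of dimension $\ge 2$ it has two distinct height-one primes, whose preimages $\p_1,\p_2$ in $R$ are incomparable primes of height $\ge 1$. Let $\SE'=\{M\mid\Supp_R(M)\subseteq V(\p_1)\}$ be the Serre subcategory attached to the specialization closed set $V(\p_1)$; being closed under arbitrary direct sums it is closed under injective hulls and $\Supp_R(\SE')=V(\p_1)$, so the equality clause of the first Theorem gives
\[ \{\q\in\MP[\FG*\SE']\mid\height\q\ge 1\}=\{\q\in V(\p_1)\mid\height\q\ge 1\}=V(\p_1). \]
This set contains $\p_1$ but not $\p_2$, hence is a nonempty proper subset of $\{\q\mid\height\q\ge 1\}$; by the dichotomy, $\MP[\FG*\SE']$ can be none of (a), (b), (c), so $(1)$ fails.

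For $(2)\Rightarrow(1)$, the case $\dim R=0$ is immediate from the $0$-dimensional characterization stated above ($\MP[\SE]=\Spec(R)=\{\m\}$, case (a)). When $\dim R=1$ the primes are the minimal primes $\p_1,\dots,\p_t$ together with $\m$, and it suffices to prove: (A) if $(C_\m)$ holds then every $(C_{\p_i})$ holds; and (B) if $(C_\m)$ fails then at most one $(C_{\p_i})$ holds. For (A), given $M$ with $\G_{\p_i}(M)=M$ and $(0:_M\p_i)\in\SE$, I note $(0:_M\m)\subseteq(0:_M\p_i)$ lies in $\SE$; since $R_{\p_i}$ is artinian we have $\p_i^{n_0}R_{\p_i}=0$ for some $n_0$, so the graded pieces $(0:_M\p_i^{n+1})/(0:_M\p_i^n)$ embed in $\Hom_R(\p_i^n/\p_i^{n+1},(0:_M\p_i))$ and lie in $\SE$, while for $n\ge n_0$ they are $\m$-torsion (as $\p_i^n/\p_i^{n+1}$ then has finite length). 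Hence $(0:_M\p_i^{n_0})\in\SE$ and $\bar M:=M/(0:_M\p_i^{n_0})$ is $\m$-torsion; the key point is that $(0:_{\bar M}\m)\hookrightarrow\Hom_R(\p_i^{n_0},(0:_M\m))\in\SE$, so $(C_\m)$ gives $\bar M\in\SE$ and therefore $M\in\SE$. For (B) I take distinct $\p_i,\p_j$ with $(C_{\p_i}),(C_{\p_j})$ holding and deduce $(C_\m)$: for $\m$-torsion $N$ with $(0:_N\m)\in\SE$, the module $(0:_N\p_i+\p_j)$ is killed by the $\m$-primary ideal $\p_i+\p_j$, hence is a module over the artinian ring $R/(\p_i+\p_j)$ with socle $(0:_N\m)\in\SE$, so the $0$-dimensional characterization (applied to the trace of $\SE$ on $R/(\p_i+\p_j)$-modules) puts it in $\SE$; then $(C_{\p_j})$ applied to $(0:_N\p_i)$ yields $(0:_N\p_i)\in\SE$, and finally $(C_{\p_i})$ applied to $N$ yields $N\in\SE$.

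The step I expect to be most delicate is the socle estimate in (A): reducing the non-$\m$-torsion module $M$ to the $\m$-torsion quotient $\bar M$ and showing $(0:_{\bar M}\m)$ is captured by $\Hom_R(\p_i^{n_0},(0:_M\m))$, an object of $\SE$. This is precisely where one uses that $\p_i$ is \emph{minimal} (so $R_{\p_i}$ is artinian and high powers of $\p_i$ vanish locally) and that $\m$ is the \emph{unique} prime above $\p_i$ (so the top of the $\p_i$-adic filtration becomes $\m$-torsion); both fail in dimension $\ge 2$, which is exactly what the construction in $(1)\Rightarrow(2)$ turns into a counterexample. A secondary point to treat carefully is the legitimacy of invoking the $0$-dimensional characterization for the induced Serre subcategory on $R/(\p_i+\p_j)$-modules in step (B), matching torsion and socles over $R/(\p_i+\p_j)$ with those over $R$.
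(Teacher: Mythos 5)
Your proof is correct, and its overall architecture coincides with the paper's: the forward direction rests on the same case analysis (does $\MP[\SE]$ contain $\m$, or else two distinct minimal primes?), and the converse on the first main theorem computing $\MP[\FG*\M]_{\geqq 1}=\Supp_{R}(\M)_{\geqq 1}$ for $\M$ closed under injective hulls. The differences lie in the supporting arguments. For $\dim R\leqq 1\Rightarrow$ trichotomy, the paper simply invokes Sazeedeh--Rasuli's calculus of Melkersson conditions (closure under radicals, and $(C_{I})\wedge(C_{J})\Leftrightarrow(C_{I+J})\wedge(C_{I\cap J})$) through its Lemmas 2.4(2) and 2.6(2); your statements (A) and (B) are exactly the instances $(C_{\m})\Rightarrow(C_{\p_i})$ and $(C_{\p_i})\wedge(C_{\p_j})\Rightarrow(C_{\m})$ of that calculus, which you re-derive by hand via the $\p_i$-adic filtration and the embedding $(0:_{\bar M}\m)\hookrightarrow\Hom_{R}(\p_i^{n_0},(0:_{M}\m))$ given by $\bar x\mapsto(a\mapsto ax)$ --- these embeddings check out, and the approach buys self-containedness at the cost of length. (In step (B) you could avoid the ``trace'' detour altogether: the paper's Lemma \ref{ideal-powers-lemma} gives $(0:_{N}\m^{k})\in\SE$, and $(0:_{N}\p_i+\p_j)$ is a submodule of it once $\m^{k}\subseteqq\p_i+\p_j$.) For the converse, the paper uses the single witness $\FG*\AR$ together with Corollary \ref{Mel-Minimax-FSF}, whereas you use $\FG*\M_{V(\p_1)}$ for a height-one prime $\p_1$ over a minimal prime of maximal coheight; both work, yours carrying the small unproved (but standard) debt that a noetherian local domain of dimension $\geqq 2$ has two incomparable height-one primes.
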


\noindent 
Additionally,  the third characterization is given over a local domain with a dimension at most two. (Theorem \ref{theorem-2-dimension}.)

\vspace{5pt}
%
%
The organization of this paper is as follows. 
In section 1, we recall several definitions and give some notations of subcategories in the module category.  
The notion of $\MP[\SE]$ is defined and three basic lemmas are given in section 2. 
In section 3, we provide the above first theorem and related topics. 
In section 4, we observe necessary and sufficient conditions to be the Melkersson subcategory and the reason why a Melkersson subcategory is not necessary closed under taking injective hulls.  
After discussing the structure of $\MP[\SE]$ over a $0$-dimensional ring and a $1$-dimensional local ring in section 5, we study it over a $2$-dimensional local domain in section 6.

\vspace{10pt}
\section{Preliminaries}
Throughout this paper, all rings are commutative noetherian and all modules are unitary. 
For a ring $R$, we suppose that all full subcategories of the $R$-modules category $\RMod$ are closed under isomorphisms. 
The zero subcategory of $\RMod$ means a subcategory consisting of the zero $R$-module.

\vspace{5pt}
%
%
First of all, we recall the definitions of Serre subcategory and Melkersson subcategory of $\RMod$. 
The following condition $(C_{I})$ was introduced by Aghapournahr and Melkersson in \cite[Definition 2.1]{AM-2008}.

\begin{definition}
(1)\, A subcategory $\SE$ of $\RMod$ is called a Serre subcategory if $\SE$  is closed under taking submodules, quotient modules, and extensions. 

\noindent
(2)\, A Serre subcategory $\SE$ of $\RMod$ is said to satisfy the Melkersson condition $(C_{I})$ with respect to an ideal $I$ of $R$ if it satisfies the following condition:  
\begin{center}
($C_{I}$) \hspace{5pt}  If $\G_{I}(M)=M$ and $(0 :_{M} I)$ is in $\SE$ for an $R$-module $M$, then $M$ is in $\SE$. 
\end{center}

\noindent
(3)\, A Serre subcategory $\SE$ of $\RMod$ is called a Melkersson subcategory if $\SE$ satisfies the Melkersson condition $(C_{I})$ for all ideals $I$ of $R$. 
\end{definition}

In the rest of this paper, 
a Serre subcategory of $\RMod$ is called simply a Serre subcategory. 
We will use a symbol $\SE$ for the Serre subcategories. 
Moreover, if a Serre subcategory $\SE$ is proved to be a Melkersson subcategory, then we will use a symbol $\M$ instead of $\SE$.

\begin{remark}
A Serre subcategory with the closedness of taking injective hulls is a Melkersson subcategory. 
Indeed, if  it holds $\G_{I}(M)=M$ for an ideal $I$ of $R$ and an $R$-module $M$, then the modules $(0 :_{M} I)$ and $M$ have the same injective hull. 
However, the converse implication is not valid in general. 
In \cite[Corollary 4.3]{Y-2016}, we gave an example of Melkersson subcategory which is not closed under taking injective hulls. 
\end{remark}

%

%
%
%
Next, we consider a subcategory consisting of extensions of modules in two given Serre subcategories. 
(For detail, see \cite{Y-2012}.)

\begin{definition}
Let $\mathcal{S}_1$ and $\mathcal{S}_2$ be Serre subcategories. 
We denote by $\mathcal{S}_1 * \mathcal{S}_2$ a subcategory consisting of $R$-modules $M$ with a short exact sequence  
\[ 0 \to S_{1} \to M \to S_{2} \to 0\]
of $R$-modules where each $S_{i}$ is in $\mathcal{S}_{i}$, that is
\[ \mathcal{S}_{1} * \mathcal{S}_{2} = \left\{ M \in R\text{-}\mathrm{Mod} \rmid|  
\begin{matrix} \text{there are $S_{1} \in \mathcal{S}_{1}$ and $S_{2} \in \mathcal{S}_{2}$ such that} \cr 
\minCDarrowwidth1pc \begin{CD}0 @>>> S_{1} @>>> M @>>> S_{2} @>>> 0\end{CD} \text{~is exact}\cr \end{matrix} \right\}. \]
\end{definition}

\vspace{3pt}
\begin{remark}
For two given Serre subcategories $\SE_{1}$ and $\SE_{2}$,  a  subcategory $\mathcal{S}_{1}* \mathcal{S}_{2}$ is not necessary a Serre subcategory again. 
(See \cite[Example 1.5]{Y-2012}.)
\end{remark}

%

%
%
%
Here, let us give several notions of subcategories which are necessary to state and prove results of this paper. 
 
\begin{example}\label{examples of subcategory}
We consider the following subcategories. 
\begin{enumerate}
\item\, $\FG=\{ M \mid M \ \text{is a finitely generated $R$-module} \ \}$.

\vspace{1pt}
\item\, $\AR=\{  M \mid M \ \text{is an Artinian $R$-module} \ \}$.

\vspace{1pt}
\item\, $\FS=\{  M \mid M \ \text{has a finite support} \ \}$.

\vspace{1pt}
\item\, $\C_{I-cof.}=\{ M \mid M \ \text{ is an $I$-cofinite $R$-module} \ \}$ for an ideal $I$ of $R$, which is defined by Hartshorne in \cite{H-1970}. 
We will denote $\C_{I-cof.}$ by $\SE_{I-cof.}$ when $\C_{I-cof.}$ is a Serre subcategory.  

\vspace{1pt}
\item\, $\FG * \AR=\{  M \mid M \ \text{is a Minimax $R$-module} \ \}$ 
which is defined by Z$\ddot{\text{o}}$schinger in \cite{Z-1986}.  

\vspace{1pt}
\item\, $\FG * \FS=\{  M \mid M \ \text{is a FSF $R$-module} \ \}$
which is defined by Quy in \cite{HQ-2010}. 
(FSF stands for “ finitely-generated-support-finite.”) 
\end{enumerate}

\vspace{5pt} \noindent 
(2), (3): Subcategories $\AR$ and $\FS$ are Serre subcategories with the closedness of taking injective hulls, and thus these subcategories are Melkersson subcategories.

\noindent 
(4): An $R$-module $M$ is called $I$-cofinite if it satisfied $\Supp_{R}(M) \subseteqq V(I)$ and $\Ext^{i}_{R}(R/I, M)$ is a finitely generated $R$-module for all integers $i$. 
It is clear that $\C_{I-cof.}$ is closed under taking extension modules. 
Moreover, if $R$ is a $1$-dimensional ring, 
then each $I$-cofinite $R$-module is a Minimax $R$-module and $\C_{I-cof.}$ is a Serre subcategory. 
(See \cite[Proposition 4.5]{M-2005}.) 

\noindent 
(5), (6): Subcategories $\FG * \SE $ and $\SE * \M$ are Serre subcategories where $\SE$ is a Serre subcategory and $\M$ is a Serre subcategory with the closedness of taking injective hulls. 
(See \cite[Corollary 3.3 and Corollary 3.5]{Y-2012}.)
It is known that a module over a complete local ring is a Minimax module if and only if it is a Matlis reflexive module. 
\end{example}

\vspace{3pt}
We close this section by providing the proof of the following well-known fact.
 
\begin{lemma}\label{ideal-powers-lemma}
Let $\SE$ be a Serre subcategory, $I$ be an ideal of $R$, and $M$ be an $R$-module. 
If $(0 :_{M} I) \in \SE$, then $(0:_{M} I^n) \in \SE$ for each positive integer $n$. 
\end{lemma}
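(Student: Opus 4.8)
The plan is to argue by induction on $n$, the case $n=1$ being exactly the hypothesis. For the inductive step I assume $(0:_M I^n)\in\SE$ and aim to deduce $(0:_M I^{n+1})\in\SE$. The central idea is to detect $(0:_M I^{n+1})$ inside modules already known to lie in $\SE$, by exploiting how multiplication by an element of $I$ shifts the annihilator filtration: if $x\in(0:_M I^{n+1})$ and $a\in I$, then $I^n(ax)=(I^n a)x\subseteq I^{n+1}x=0$, so $ax\in(0:_M I^n)$.

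Since $R$ is noetherian I may fix generators $I=(a_1,\dots,a_r)$ and consider the $R$-homomorphism $\varphi\colon(0:_M I^{n+1})\to\bigoplus_{i=1}^{r}(0:_M I^n)$ given by $\varphi(x)=(a_1x,\dots,a_rx)$; the observation above shows that each component genuinely lands in $(0:_M I^n)$, so $\varphi$ is well defined. Its kernel consists of those $x$ with $a_ix=0$ for every $i$, equivalently with $Ix=0$, which is precisely $(0:_M I)$. Hence $\varphi$ factors through an injection $(0:_M I^{n+1})/(0:_M I)\hookrightarrow\bigoplus_{i=1}^{r}(0:_M I^n)$.

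By the inductive hypothesis $(0:_M I^n)\in\SE$, and a Serre subcategory is closed under finite direct sums (being closed under extensions), so the target of this injection lies in $\SE$; closure under submodules then forces $(0:_M I^{n+1})/(0:_M I)\in\SE$. Finally I apply closure under extensions to the short exact sequence $0\to(0:_M I)\to(0:_M I^{n+1})\to(0:_M I^{n+1})/(0:_M I)\to 0$, whose two outer terms both lie in $\SE$, to conclude $(0:_M I^{n+1})\in\SE$, which closes the induction.

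I expect the only points needing care to be the two verifications in the middle step: that $\varphi$ is well defined (this is where the inclusion $I^n a_i\subseteq I^{n+1}$ is used) and that its kernel is exactly $(0:_M I)$ rather than $(0:_M I^n)$. The finite generation of $I$, available because $R$ is noetherian, is essential here, since it keeps the direct sum finite so that membership in $\SE$ is preserved under the two closure properties.
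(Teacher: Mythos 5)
Your proof is correct and is essentially the paper's argument in a slightly different packaging: your map $x\mapsto(a_1x,\dots,a_rx)$ is exactly the composite the paper obtains by applying $\Hom_R(-,M)$ to the surjection $\oplus^{s}R/I\to I/I^2$ and to the inclusion $I/I^2\to R/I^2$, and both proofs then finish by the same combination of finite-direct-sum, submodule, and extension closure for $\SE$. The only (harmless) difference is how the induction is organized — your short exact sequence has $(0:_MI)$ as the submodule and a subquotient of $\oplus^{r}(0:_MI^n)$ as the quotient, whereas the paper's iteration keeps $(0:_MI^n)$ as the submodule and embeds the quotient into finitely many copies of $(0:_MI)$.
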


\begin{proof}
Since $I/I^2$ is a finitely generated $R/I$-module, there exists a short exact sequence 
\[ 0 \to \mathrm{Ker}\,  \varphi \to \oplus^{s} R/I \overset{\varphi}{\to} I/I^2 \to 0 \]
of $R$-modules for some positive integer $s$. 
By applying the left exact functor $\Hom_{R}( - , M)$, we see 
\[ \Hom_{R}( I/I^2, M) \subseteqq \Hom_{R}( \oplus^{s} R/I, M) \cong \oplus^{s} \Hom_{R}(R/I, M) \cong \oplus^{s} (0:_{M} I). \]
Therefore we see that $\Hom_{R}(I/I^2, M)$ is in $\SE$. 
A short exact sequence $0 \to I/I^2 \to R/I^2 \to R/I \to 0$ implies an exact sequence
\[ 0 \to \Hom_{R}(R/I, M) \to \Hom_{R}(R/I^2, M) \to \Hom_{R}(I/I^2, M). \] 
Consequently, we obtain $\left( 0:_{M} I^2 \right) \cong \Hom_{R}(R/I^2, M) \in \SE$. 

By repeating the same argument, we can prove $(0:_{M} I^n) \in \SE$ for each positive integer $n$. 
\end{proof}

\vspace{10pt}
%
%
\section{A set of prime ideals concerned with the Melkersson condition}
The following useful result was showed by Sazeedeh and Rasuli in \cite[the proof of Proposition 2.4 and Corollary 2.10]{SR-2016}. 

\begin{lemma}[Sazeedeh-Rasuli]\label{lemma-SR}
Let $\SE$ be a Serre subcategory. 
For ideals $I$ and $J$ of $R$, the following hold. 
\begin{enumerate}
\item\, $\SE$ satisfies the Melkersson condition $(C_{I})$ if and only if $\SE$ satisfies the Melkersson condition $\left( C_{\sqrt{I}} \right)$. 

\item\, $\SE$ satisfies Melkersson conditions $(C_{I})$ and $(C_{J})$ if and only if  
$\SE$ satisfies  Melkersson conditions $\left(C_{I+J} \right)$ and $\left( C_{I \cap J} \right)$. 
\end{enumerate}
\end{lemma}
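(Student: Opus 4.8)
The plan is to reduce everything to the two defining closure properties of a Serre subcategory (closure under submodules/quotients and under extensions, hence under finite direct sums), together with Lemma \ref{ideal-powers-lemma} and the elementary torsion identities $\G_{I}(M)=\G_{\sqrt{I}}(M)$, $\G_{I+J}(M)=\G_{I}(M)\cap\G_{J}(M)$, and $\G_{I\cap J}(M)=\G_{IJ}(M)$, all immediate from noetherianity. For part (1), since $\G_{I}=\G_{\sqrt{I}}$ the torsion hypothesis $\G_{I}(M)=M$ is insensitive to passing to the radical, so only the socle hypotheses must be compared. For the implication $\left(C_{\sqrt{I}}\right)\Rightarrow(C_{I})$ the containment $I\subseteq\sqrt{I}$ gives $(0:_{M}\sqrt{I})\subseteq(0:_{M}I)$, so the hypothesis of $(C_{I})$ feeds directly into $\left(C_{\sqrt{I}}\right)$. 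The reverse implication is where Lemma \ref{ideal-powers-lemma} enters: choosing $k$ with $(\sqrt{I})^{k}\subseteq I$, I would promote $(0:_{M}\sqrt{I})\in\SE$ to $(0:_{M}(\sqrt{I})^{k})\in\SE$ by that lemma, and then $(0:_{M}I)\subseteq(0:_{M}(\sqrt{I})^{k})$ yields $(0:_{M}I)\in\SE$.

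For the forward half of part (2), the condition $\left(C_{I+J}\right)$ is the easy one: a module $M$ with $\G_{I+J}(M)=M$ is simultaneously $I$- and $J$-torsion, so I would apply $(C_{J})$ to the $J$-torsion submodule $(0:_{M}I)$, whose $J$-socle is exactly $(0:_{M}I+J)\in\SE$, to get $(0:_{M}I)\in\SE$, and then invoke $(C_{I})$. For $\left(C_{I\cap J}\right)$ I would filter $M$ by $0\to\G_{I}(M)\to M\to M/\G_{I}(M)\to 0$. First I'd record $(0:_{M}IJ)\in\SE$ (since $(I\cap J)^{2}\subseteq IJ$, apply Lemma \ref{ideal-powers-lemma} to $I\cap J$). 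The submodule $\G_{I}(M)$ lies in $\SE$ by $(C_{I})$, because $I\cap J\subseteq I$ gives $(0:_{M}I)\subseteq(0:_{M}I\cap J)\in\SE$; the quotient is $J$-torsion, and verifying that its $J$-socle lies in $\SE$ is the genuine work, after which $(C_{J})$ and extension-closedness finish.

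For the reverse half it suffices, by symmetry, to derive $(C_{I})$. Take $M$ with $\G_{I}(M)=M$ and $(0:_{M}I)\in\SE$, and filter by $N=\G_{J}(M)$. The module $N$ is $(I+J)$-torsion with $(0:_{N}I+J)\subseteq(0:_{M}I)\in\SE$, so $\left(C_{I+J}\right)$ gives $N\in\SE$. The quotient $\bar{M}=M/N$ is $I$-torsion and $J$-torsion-free, and $J$-torsion-freeness upgrades the $(I\cap J)$-socle to the $I$-socle: any $\bar{x}$ with $(I\cap J)\bar{x}=0$ has $I\bar{x}$ annihilated by $J$ (as $JI\subseteq I\cap J$), hence $I\bar{x}=0$, so $(0:_{\bar{M}}I\cap J)=(0:_{\bar{M}}I)$. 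Once this socle is shown to be in $\SE$, the condition $\left(C_{I\cap J}\right)$ applied to the $(I\cap J)$-torsion module $\bar{M}$ gives $\bar{M}\in\SE$, and the filtration closes up.

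The main obstacle, appearing in both halves of part (2), is exactly the recurring step: to show that the $K$-socle of a subquotient $M/N$ belongs to $\SE$ (with $K=J$, resp. $K=I$), knowing only $N\in\SE$ and $(0:_{M}K)\in\SE$. The naive containments run the wrong way, and a torsion-power estimate fails to supply a \emph{uniform} annihilator exponent. I would resolve this by a finite-generation trick: writing $K=(c_{1},\dots,c_{t})$ and setting $L=(N:_{M}K)$, the map $L\to\bigoplus^{t}N$, $x\mapsto(c_{1}x,\dots,c_{t}x)$, is well defined with kernel $(0:_{M}K)$; since $\bigoplus^{t}N\in\SE$ and $(0:_{M}K)\in\SE$, extension-closedness forces $L\in\SE$, whence the socle $L/N\in\SE$. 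This single device clears the obstruction in both the $\left(C_{I\cap J}\right)$ forward step and the $(C_{I})$ reverse step, and each filtration then closes by extension-closedness to place $M$ in $\SE$.
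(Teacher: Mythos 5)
The paper does not actually prove this lemma --- it is quoted from Sazeedeh--Rasuli \cite{SR-2016} --- so there is no internal proof to compare against; judged on its own, your argument is correct and complete. Part (1) and the $(C_{I+J})$ direction are handled exactly as one would expect, and you correctly isolate the one nontrivial recurring step: passing from $N\in\SE$ and $(0:_{M}K)\in\SE$ to $(0:_{M/N}K)\in\SE$. Your resolution via the map $(N:_{M}K)\to\oplus^{t}N$, $x\mapsto(c_{1}x,\dots,c_{t}x)$, with kernel $(0:_{M}K)$, is sound (the image is a submodule of a finite direct sum of copies of $N$, hence in $\SE$, and $(N:_{M}K)$ is then an extension of that image by $(0:_{M}K)$); this is in fact the standard device from Aghapournahr--Melkersson \cite{AM-2008} and is essentially how Sazeedeh and Rasuli argue. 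The remaining details check out: $\G_{I}(M)\in\SE$ via $(C_{I})$ and $(0:_{M}I)\subseteq(0:_{M}I\cap J)$ in the forward half; $\G_{J}(M)\in\SE$ via $(C_{I+J})$, the identification $(0:_{\bar{M}}I\cap J)=(0:_{\bar{M}}I)$ on the $J$-torsion-free quotient, and the containment $\G_{I}\subseteq\G_{I\cap J}$ needed to apply $(C_{I\cap J})$ to $\bar{M}$ in the reverse half. (Minor remark: the observation that $(0:_{M}IJ)\in\SE$ is never used and can be dropped.)
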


%
%
%
The above lemma says that a Serre subcategory satisfies the Melkersson condition $(C_{I})$ for an ideal $I$ of $R$ if it satisfies the Melkersson condition $(C_{\p})$ for all $\p \in \Min(R/I)$. 
By this reason, we will study the following set of prime ideals which satisfy a Melkersson condition with respect to itself for a Serre subcategory. 
We denote $\bigcup_{M \in \mathcal{X}} \Supp_{R}(M)$ by  $\Supp_{R}(\mathcal{X})$ for a subcategory $\mathcal{X}$ of $\RMod$.  

\begin{definition}
Let $\SE$ be a Serre subcategory and $i$ be an integer. 
We denote by $\MP[\SE]$ (respectively, $\MP[\SE]_{\geqq i}$) the set of prime ideals $\p$ (respectively, with $\height\, \p \geqq i$) such that $\SE$ satisfies the Melkersson condition $(C_{\p})$, that is  
\begin{align*}
\MP[\SE ]&=\{ \p \in \Spec(R) \mid \SE \text{ satisfies the Melkersson condition } (C_{\p}) \},  \\
\MP[\SE ]_{\geqq i}&=\{ \p \in \Spec(R) \mid \SE \text{ satisfies the Melkersson condition } (C_{\p})  \text{ and } \height\, \p \geqq i \} . 
\end{align*}
Similarly, we will also use notations $\Spec(R)_{\geqq i}$ and $\Supp_{R}(\SE)_{\geqq i}$.   
\end{definition}

\begin{remark}\label{remark-MP}
Let $\SE$ be a Serre subcategory and $I$ be an ideal of $R$. 

\noindent
(1)\, By Lemma \ref{lemma-SR}, 
if we have $\Min(R/I) \subseteqq \MP[\SE]$, then $\SE$ satisfies the Melkersson condition $(C_{I})$. 
However, the converse implication does not necessary hold. 
(See Lemma \ref{lemma-MP} (1) and Proposition \ref{MP[FG]-1-dim} (2).)

\noindent
(2)\, $\SE$ is a Melkersson subcategory if and only if one has $\MP[\SE]=\Spec(R)$. 

\noindent 
(3)\, If a Serre subcategory $\M$ is closed under taking injective hulls, then we have $\MP[\M]=\Spec(R)$. 
In other words, 
we can not represent $\SE$ with $\MP[\SE] \not =\Spec(R)$ as the form $\{ M \in \RMod \mid \Supp_{R}(M) \subseteqq W \}$ for a specialization closed subset $W$ of $\Spec(R)$, which is a Serre subcategory classified by Gabriel in \cite{G-1962}.  

\noindent
(4)\, A set $\MP[\SE]$ is not necessary a specialization or a generalization closed subset of $\Spec(R)$. (See Example \ref{example-MP}.)
\end{remark}

Here, we shall give three basic lemmas, which will be used later. 
The first lemma states the relationship between $\MP[\SE]$ and $\Min(R)$. 

\begin{lemma}\label{lemma-MP}
Let $\SE$ be a Serre subcategory. 
\begin{enumerate}
\item\, $\SE$ always satisfies the Melkersson condition $(C_{(0)})$. 
In particular, if $R$ has a unique minimal prime ideal, then $\MP[\SE]$ contains $\Min(R)$. 

\item\, If $\MP[\SE]$ contains $\Spec(R)_{\geqq 1}$, then $\MP[\SE]$ contains $\Min(R)$.  
In particular, it holds $\MP[\SE]=\Spec(R)$. 
\end{enumerate}
\end{lemma}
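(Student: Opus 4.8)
The plan is to handle the two parts in turn, letting part (1) feed into part (2).

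For part (1), I would first unwind the Melkersson condition for the zero ideal. Since $(0)^{n}=(0)$ for every $n\geqq 1$, one has $\G_{(0)}(M)=\bigcup_{n}(0:_{M}(0)^{n})=(0:_{M}(0))=M$ for every $R$-module $M$, and likewise $(0:_{M}(0))=M$. Hence the condition $(C_{(0)})$ reads ``if $M\in\SE$ then $M\in\SE$,'' which is vacuously true, so $\SE$ satisfies $(C_{(0)})$. For the ``in particular'' clause, suppose $R$ has a unique minimal prime $\p_{0}$; then the nilradical $\sqrt{(0)}$ equals $\p_{0}$, and Lemma \ref{lemma-SR}(1) gives that $(C_{(0)})$ is equivalent to $(C_{\sqrt{(0)}})=(C_{\p_{0}})$. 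Thus $\p_{0}\in\MP[\SE]$ and $\Min(R)=\{\p_{0}\}\subseteqq\MP[\SE]$.

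For part (2), fix a minimal prime $\p_{0}$ of $R$; the goal is $\p_{0}\in\MP[\SE]$. If $\p_{0}$ is the unique minimal prime this is already part (1), so I may assume $R$ has at least two minimal primes and set $J=\bigcap\p$, the intersection running over the finitely many (by noetherianness) minimal primes of $R$ distinct from $\p_{0}$. The strategy is to apply Lemma \ref{lemma-SR}(2) to the pair $\p_{0}$ and $J$ so as to extract $(C_{\p_{0}})$ from $(C_{\p_{0}+J})$ and $(C_{\p_{0}\cap J})$. The intersection is immediate: $\p_{0}\cap J=\bigcap_{\p\in\Min(R)}\p=\sqrt{(0)}$, so by Lemma \ref{lemma-SR}(1) together with part (1) the condition $(C_{\p_{0}\cap J})$ holds. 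For the sum, any prime $\q\supseteqq\p_{0}+J$ contains $\p_{0}$ and, being prime, contains one of the minimal primes $\p\neq\p_{0}$ whose intersection defines $J$; since minimal primes are incomparable, $\q\neq\p_{0}$, whence $\q\supsetneqq\p_{0}$ and $\height\,\q\geqq 1$. Thus every element of $\Min(R/(\p_{0}+J))$ lies in $\Spec(R)_{\geqq 1}\subseteqq\MP[\SE]$, and Remark \ref{remark-MP}(1) yields $(C_{\p_{0}+J})$. The reverse implication in Lemma \ref{lemma-SR}(2) now converts $(C_{\p_{0}+J})$ and $(C_{\p_{0}\cap J})$ into $(C_{\p_{0}})$, so $\p_{0}\in\MP[\SE]$. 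As $\p_{0}$ was arbitrary, $\Min(R)\subseteqq\MP[\SE]$; and since a prime has height $0$ precisely when it is minimal, $\Spec(R)=\Spec(R)_{\geqq 1}\cup\Min(R)$, both pieces lying in $\MP[\SE]$, so $\MP[\SE]=\Spec(R)$.

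The one genuine obstacle is to break the apparent circularity, namely extracting $(C_{\p_{0}})$ at a height-$0$ prime from Melkersson conditions guaranteed only at height-$\geqq 1$ primes. The decisive move is to pair $\p_{0}$ with the intersection $J$ of the remaining minimal primes: this makes $\p_{0}\cap J$ collapse to the nilradical, which is handled for free by part (1), while forcing every prime over $\p_{0}+J$ to strictly contain $\p_{0}$ and hence to have height at least one. I would also record the degenerate possibility $\p_{0}+J=R$, but there $\Min(R/(\p_{0}+J))=\emptyset$ and $(C_{R})$ holds trivially, so the argument goes through unchanged.
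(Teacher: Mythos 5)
Your proposal is correct and follows essentially the same route as the paper: part (1) by unwinding $(C_{(0)})$ and invoking Lemma \ref{lemma-SR}(1), and part (2) by pairing a fixed minimal prime $\p_{0}$ with the intersection $J$ of the remaining minimal primes, so that $\p_{0}\cap J=\sqrt{(0)}$ is handled by part (1) while $\height(\p_{0}+J)\geqq 1$ lets Remark \ref{remark-MP}(1) supply $(C_{\p_{0}+J})$, after which Lemma \ref{lemma-SR}(2) yields $(C_{\p_{0}})$. Your extra remarks (incomparability of minimal primes to justify $\height(\p_{0}+J)>0$, and the degenerate case $\p_{0}+J=R$) are harmless elaborations of steps the paper leaves implicit.
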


\begin{proof}
(1)\, We suppose $M=\G_{(0)}(M)$ and that $\left( 0:_{M}(0) \right)$ is in $\SE$. 
Then, it is clear that $M=(0 :_{M}(0))$ is in $\SE$. 
This means $\SE$ satisfies the Melkersson condition $(C_{(0)})$. 
Furthermore, if one has $\Min(R)=\{\p \}$, 
we see $\Min(R)=\left\{ \sqrt{(0)} \right\} \subseteqq \MP[\SE]$ by Lemma \ref{lemma-SR} (1). 

\noindent
(2)\, Let $\p$ be a minimal prime ideal of $R$. 
For an irredundant primary decomposition $\sqrt{(0)}=\cap^{n}_{i=1} \p_{i}$ where $\p_{i} \in \Min(R)$,  
we may assume $\p_{1}=\p$ and $n \geqq 2$ by the assertion (1). 
We denote $J=\cap^{n}_{i=2} \p_{i}$. 
It follows from $\sqrt{(0)}=\p \cap J$ and Lemma \ref{lemma-SR} (1) that $\SE$ satisfies the Melkersson condition $\left( C_{\p \cap J} \right)$ because $\SE$ satisfies the Melkersson condition $(C_{(0)})$. 
On the other hand, since $\height\, (\p+J)>0$, we have $\Min \! \left( R/(\p+J) \right) \subseteqq \Spec(R)_{\geqq 1} \subseteqq \MP[\SE]$. 
Therefore, Remark \ref{remark-MP} (1)  deduces that $\SE$ satisfies the Melkersson condition $(C_{\p+J})$. 
Consequently, the prime ideal $\p$ belongs to $\MP[\SE]$ by Lemma \ref{lemma-SR} (2). 
\end{proof}

\begin{remark}\label{MP-0-dim}
If $R$ is a $0$-dimensional ring, 
then it is easy to see $\MP[\SE]=\Spec(R)$ for all Serre subcategory $\SE$ by Lemma \ref{lemma-SR} and Lemma \ref{lemma-MP} (1). (Also see \cite[Corollary 2.13]{SR-2016}.) 
Meanwhile, the converse implication will be proved in section 5.  
Therefore, it is interesting to study the structure of $\MP[\SE]$ over a ring with positive dimension. 
\end{remark}

The second lemma says the relationship between $\MP[\SE]$, $\Max(R)$, and $\AR$. 
We denote by $E_{R}(M)$ the injective hull of an $R$-module $M$. 

\begin{lemma}\label{relationship-AR}
Let $\SE$ be a non-zero Serre subcategory. 
\begin{enumerate}
\item\, If one has $\Max(R) \subseteqq \MP[\SE]$, then there exists a maximal ideal $\m$ of $R$ such that $E_{R}(R/\m) \in \SE$. 
In particular, if $R$ is a local ring with maximal ideal $\m$ and $\m \in \MP[\SE]$, then $\SE$ contains $\AR$. 

\item\, We suppose that $R$ is a local ring with maximal ideal $\m$. 
If $\MP[\SE]$ has at least two prime ideals $\p$ of $R$ with $\height\, \p =\dim R-1$, then one has $\m \in \MP[\SE]$.  In particular, $\SE$ contains $\AR$. 
\end{enumerate}
\end{lemma}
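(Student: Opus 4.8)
The plan is to deduce both parts from a single computation about the injective hull of a residue field together with Lemma \ref{lemma-SR}, so let me record that computation first. For a maximal ideal $\m$ of $R$ the module $E_R(R/\m)$ is $\m$-torsion, so $\G_\m(E_R(R/\m)) = E_R(R/\m)$, and its socle satisfies $(0 :_{E_R(R/\m)} \m) = \Hom_R(R/\m, E_R(R/\m)) \cong R/\m$, since $R/\m$ is an essential simple submodule of its injective hull. Hence whenever $R/\m \in \SE$ and $\SE$ satisfies $(C_\m)$, the Melkersson condition forces $E_R(R/\m) \in \SE$.

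Granting this, for the first assertion of (1) I would manufacture such a maximal ideal from the non-zero hypothesis. Pick any nonzero $M \in \SE$ and an associated prime $\p \in \Ass_R(M)$; then $R/\p$ embeds in $M$, so $R/\p \in \SE$ by closure under submodules. Choosing any maximal ideal $\m \supseteq \p$, the residue field $R/\m$ is a quotient of $R/\p$ (the surjection $R/\p \twoheadrightarrow R/\m$ is well defined because $\p \subseteq \m$), hence $R/\m \in \SE$ by closure under quotients. Since $\m \in \Max(R) \subseteq \MP[\SE]$, the condition $(C_\m)$ applies and the opening computation yields $E_R(R/\m) \in \SE$.

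For the ``in particular'' clause of (1), when $R$ is local with maximal ideal $\m$ we have $\Max(R) = \{\m\}$, so $\m \in \MP[\SE]$ is precisely the hypothesis just used and gives $E_R(R/\m) \in \SE$. It then remains to note that an Artinian $R$-module $A$ has essential socle of finite length, so its injective hull is a finite direct sum $E_R(R/\m)^{n}$; thus $A$ embeds in $E_R(R/\m)^{n} \in \SE$, and closure under finite direct sums and submodules gives $A \in \SE$, i.e. $\AR \subseteq \SE$.

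Finally, for (2) I would collapse the two height-$(\dim R - 1)$ primes onto $\m$ via Lemma \ref{lemma-SR}. If $\p_1 \neq \p_2$ both have height $\dim R - 1$, then neither contains the other, so $\p_1 + \p_2$ strictly contains $\p_1$; consequently every prime containing $\p_1 + \p_2$ has height at least $\dim R$ and lies inside $\m$, forcing it to equal $\m$, so that $\sqrt{\p_1 + \p_2} = \m$. Since $\p_1, \p_2 \in \MP[\SE]$, Lemma \ref{lemma-SR}(2) shows $\SE$ satisfies $(C_{\p_1 + \p_2})$, and Lemma \ref{lemma-SR}(1) upgrades this to the condition $(C_{\sqrt{\p_1 + \p_2}})$, namely $(C_\m)$; hence $\m \in \MP[\SE]$ and $\AR \subseteq \SE$ follows from the clause just proved. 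The only mildly delicate points are the socle and essentiality identification in the opening computation and the height bookkeeping in (2); both are standard, which is exactly why the lemma reduces so cleanly to Lemma \ref{lemma-SR} and the structure of Artinian modules.
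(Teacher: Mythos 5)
Your proposal is correct and follows essentially the same route as the paper: identify the socle $(0:_{E_R(R/\m)}\m)\cong R/\m$ via essentiality, produce $R/\m\in\SE$ from a nonzero module in $\SE$ through an associated prime and a maximal ideal above it, apply $(C_\m)$ to get $E_R(R/\m)\in\SE$ and hence $\AR\subseteq\SE$, and for (2) use Lemma \ref{lemma-SR} together with $\sqrt{\p_1+\p_2}=\m$. No gaps; the only cosmetic difference is that you spell out the height bookkeeping and the embedding of Artinian modules where the paper cites Brodmann--Sharp.
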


\begin{proof}
(1)\, Let $M$ be a non-zero $R$-module in $\SE$. 
There exists a prime ideal $\p$ of $R$ such that $R/\p$ is embedded in $M$.  
We take a maximal ideal $\m$ of $R$ containing $\p$. 
Then, there is a surjective homomorphism from $R/\p$ to $R/\m$. 
Since $\SE$ is a Serre subcategory, the module $R/\m$ is in $\SE$. 
Here, we claim that an $R/\m$-vector space $V=\left( 0:_{E_{R}(R/\m)} \m \right)$ is equal to $R/\m$. 
It is clear that $V$ contains $R/\m$. 
If $V \not = R/\m$, then there exists a non-zero $R/\m$-vector subspace $V'$ of $V$ with $V' \cap \left( R/\m \right)=0$. 
However, since $E_{R}(R/\m)$ is an essential extension of $R/\m$, this equality is a contradiction. 
Consequently, we see that $\left( 0:_{E_{R}(R/\m)} \m \right)=R/\m$ is in $\SE$. 
We also note $\G_{\m}\left( E_{R}(R/\m) \right)=E_{R}(R/\m)$ and $\m \in \Max(R) \subseteqq \MP[\SE]$. 
By applying the Melkersson condition $(C_{\m})$ to $\SE$, we deduce that $E_{R}(R/\m)$ is in $\SE$. 

Additionally, we suppose that $R$ is a local ring with maximal ideal $\m$. 
Any Artinian $R$-module is embedded in a finite direct sum of copies of $E_{R}(R/\m)$ by \cite[10.2.8 Corollary]{BS}. 
Since $E_{R}(R/\m)$ is in $\SE$ by the above argument, we see that $\SE$ contains $\AR$.   

\noindent
(2)\, Let $\p$ and $\q$ be prime ideals of $R$ with $\height\, \p=\height\, \q=\dim R-1$. 
If $\p$ and $\q$ belong to $\MP[\SE]$, then Lemma \ref{lemma-SR} deduces that $\SE$ satisfies the conditions $(C_{\p+\q})$ and  $(C_{\sqrt{\p+\q}})$. 
Since $R$ is a local ring and $\height\, (\p+\q)=\dim R$, it holds $\sqrt{\p+\q}=\m$. 
Consequently, we see $\m \in \MP[\SE]$.  
\end{proof}

\begin{remark}
By virtue of Lemma \ref{relationship-AR} (1), 
the subcategory $\AR$ is contained in all non-zero Melkersson subcategories over a local ring. 
\end{remark}

Finally, we will study the relationship between $\MP[\SE]$, prime ideals with large height, and $\FG$. 

\begin{lemma}\label{relationship-FG} 
Let $R$ be a local ring with maximal ideal $\m$ and $\dim R>0$. 
We suppose that a non-zero Serre subcategory $\SE$ is contained in $\FG$, 
then the following hold.  
\begin{enumerate}
\item\, One has $\m \not \in \MP[\SE]$. 
In other words, it holds $\Spec(R)_{\geqq \dim R} \cap \MP[\SE]=\emptyset$. 

\item\, If $R$ has at least two prime ideals $\p$ with $\height\, \p =\dim R-1$, 
then it holds $\Spec(R)_{\geqq \dim R-1} \cap \MP[\SE]=\emptyset$. 
\end{enumerate}
\end{lemma}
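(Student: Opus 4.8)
The plan is to treat the two parts separately, exploiting one structural feature of a nonzero Serre subcategory $\SE \subseteq \FG$: exactly as in the proof of Lemma \ref{relationship-AR}(1), any nonzero module in $\SE$ contains some $R/\q'$ as a submodule and hence admits $R/\m$ as a quotient, so $R/\m \in \SE$; since $\SE$ is closed under extensions it then contains every finite length $R$-module supported at $\{\m\}$. This will be the mechanism by which the socle condition in $(C_\p)$ gets satisfied.

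For (1) I would first record that $\m$ is the unique prime of height $\geq \dim R$: if $\height\,\mathfrak r \geq \dim R$ then $\height\,\mathfrak r = \dim R = \height\,\m$, and $\mathfrak r \subsetneq \m$ would force $\height\,\m > \height\,\mathfrak r$, so $\mathfrak r = \m$; thus $\Spec(R)_{\geqq \dim R} = \{\m\}$ and the two displayed formulations coincide. Then I argue by contradiction: if $\m \in \MP[\SE]$, Lemma \ref{relationship-AR}(1) gives $\AR \subseteq \SE \subseteq \FG$, so $E_R(R/\m) \in \FG$. But an Artinian module is finitely generated only if it has finite length, i.e.\ only if $R$ is Artinian, and this contradicts $\dim R > 0$. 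Hence $\m \notin \MP[\SE]$.

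For (2) the prime of height $\dim R$ is $\m$, already excluded by (1), so it remains to rule out every prime $\p$ with $\height\,\p = \dim R - 1$. Fix such a $\p$; by hypothesis there is a second prime $\q \neq \p$ with $\height\,\q = \dim R - 1$, and as in the proof of Lemma \ref{relationship-AR}(2) distinctness and equal height give $\sqrt{\p+\q} = \m$. The crux is to produce a module violating $(C_\p)$, and I would take $M := E_{R/\q}(R/\m)$, the injective hull of the residue field over the one-dimensional local domain $R/\q$, viewed as an $R$-module through $R \twoheadrightarrow R/\q$. I then verify three points. First, $M$ is $(\m/\q)$-torsion over $R/\q$, hence $\m$-torsion over $R$, and since $\p \subseteq \m$ we get $\G_{\p}(M) = M$. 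Second, $M$ is not finitely generated, because $E_{R/\q}(R/\m)$ is not finitely generated over $R/\q$ (as $\dim R/\q \geqq 1$) and $\q$ annihilates $M$. Third, the image of $\p$ in $R/\q$ generates $(\p+\q)/\q$, which is $(\m/\q)$-primary since $\sqrt{\p+\q}=\m$; therefore $(0 :_M \p) = (0 :_M (\p+\q)/\q) \cong \Hom_{R/\q}\!\left( R/(\p+\q),\, M \right)$ is the Matlis dual over $R/\q$ of the finite length module $R/(\p+\q)$, hence a nonzero finite length $R$-module supported at $\{\m\}$. As noted above such a module lies in $\SE$, so $(C_\p)$ would force $M \in \SE \subseteq \FG$, contradicting that $M$ is not finitely generated. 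Thus $\p \notin \MP[\SE]$, and combined with (1) this yields $\Spec(R)_{\geqq \dim R - 1} \cap \MP[\SE] = \emptyset$.

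The step I expect to be the real obstacle is precisely the choice of $M$. The first thing one tries, $E_R(R/\m)$, fails: its $\p$-socle $\Hom_R(R/\p, E_R(R/\m)) \cong E_{R/\p}(R/\m)$ is of infinite length and nowhere near finitely generated, so the hypothesis of $(C_\p)$ is never met. The decisive idea is to pass to the quotient $R/\q$ by the \emph{second} prime and to use $\sqrt{\p+\q}=\m$, which is exactly what collapses the $\p$-socle of $E_{R/\q}(R/\m)$ down to finite length while leaving $M$ itself non-finitely generated; this is the only place where the assumption that $R$ has at least two primes of height $\dim R - 1$ is used, and it is what makes the argument go through without invoking Lemma \ref{relationship-AR}(2).
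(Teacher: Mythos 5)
Your proposal is correct and follows essentially the same route as the paper: part (1) via Lemma \ref{relationship-AR}(1) and the non-finite-generation of $E_{R}(R/\m)$, and part (2) via the very same test module $M=E_{R/\q}(R/\m)\cong\Hom_{R}\left(R/\q,E_{R}(R/\m)\right)$, checking the same three conditions. The only cosmetic difference is that you establish the finite length of $(0:_{M}\p)$ by Matlis duality over $R/\q$, whereas the paper embeds it into $\left(0:_{E_{R}(R/\m)}\m^{n}\right)$ using $\m^{n}\subseteqq\p+\q$ and Lemma \ref{ideal-powers-lemma}; both are valid.
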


\begin{proof}
(1)\, We assume $\m \in \MP[\SE]$. 
Then $\SE$ contains $\AR$ by Lemma \ref{relationship-AR} (1). 
Thus, our assumption implies that $\FG$ also contains $\AR$. 
However, this conclusion is a contradiction because $E_{R}(R/\m)$ is an Artinian $R$-module but not a finitely generated $R$-module over a non-Artinian local ring. 
 
\noindent
(2)\, We fix a prime ideal $\p$ with $\height\, \p =\dim R-1$. 
To prove $\p \not \in \MP[\SE]$, 
we take a prime ideal $\q \not = \p$ with $\height\, \q = \dim R-1$ and set $M=\Hom_{R} \left( R/\q, E_{R}(R/\m) \right)$. 
We shall show that the following three assertions hold:  
(a) One has $\G_{\p}(M)=M$; 
(b) The $R$-module $(0:_{M} \p)$ is in $\SE$;  
(c) The $R$-module $M$ is not in $\SE$. 

\vspace{3pt} \noindent 
(a): We have $\Ass_{R}(M)=V(\q) \cap \Ass_{R}(E_{R}(R/\m))=\{ \m \} \subseteqq V(\p)$. 
Therefore, it holds $\G_{\p}(M)=M$.  

\noindent
(b): Since $R$ is a local ring and it holds $\height (\p+\q)=\dim R$, we have $\sqrt{\p+\q}=\m$. 
Thus there exists a positive integer $n$ such that $\m^n \subseteqq \p+\q$.  
Then it holds $(0:_{M} \p)=(0:_{E_{R}(R/\m)} \p+\q) \subseteqq \left( 0:_{E_{R}(R/\m)} \m^n \right)$. 
Since the module $\left( 0:_{E_{R}(R/\m)} \m \right) = R/\m$ is  in the subcategory $\FL$ consisting of $R$-modules with finite length, 
Lemma \ref{ideal-powers-lemma} implies that $\left(0:_{E_{R}(R/\m)} \m^n \right)$ is in $\FL$. 
Here, we recall that $\FL$ is contained in any non-zero Serre subcategory over a local ring. 
Therefore, the module $\left(0:_{E_{R}(R/\m)} \m^n \right)$ is also in $\SE$. 
Consequently, we see that $(0:_{M} \p)$ is in $\SE$. 

\noindent
(c): We note that there exists an isomorphism $M=\Hom_{R}\left(R/\q, E_{R}(R/\m) \right) \cong E_{R/\q}(R/\m)$ by \cite[10.1.15 Lemma]{BS}. 
We assume that $M$ is in $\SE$. 
Then our assumption implies that $M$ is finitely generated as an $R$-module, and thus it is finitely generated as an $R/\q$-module. 
This means that a $1$-dimensional local ring $R/\q$ has the non-zero finitely generated injective $R/\q$-module.    
However, in this case, the ring $R/\q$ must be Artinian and this is a contradiction. 
Consequently, we see that $M$ is not in $\SE$. 

In conclusion, the above three conditions (a)-(c) imply $\p \not \in \MP[\SE]$. 
By replacing $\p$ and $\q$, we can also prove $\q \not \in \MP[\SE]$. 
Combining with the assertion (1), we can obtain $\Spec(R)_{\geqq \dim R-1} \cap \MP[\SE]=\emptyset$. 
\end{proof}

\vspace{10pt}
%
%
\section{Melkersson conditions for a subcategory consisting of extension modules}
For a Serre subcategory $\SE$, a subcategory $\FG*\SE$ is a Serre subcategory by \cite[Corollary 3.3]{Y-2012} and several significant Serre subcategories are represented as this form (e.g. the subcategory consisting of FSF modules, the subcategory consisting of Minimax modules, and thus the subcategory consisting of Matlis reflexive modules over a complete local ring).  
The purpose of this section is to study the structure of $\MP[ \FG *\SE ]$.  
We will see that members of $\MP[ \FG*\M]$ with positive height is completely described by the support of $\M$ if $\M$ is a Serre subcategory with the closedness of taking injective hulls.

\vspace{5pt}
We start to prove the following key lemma in this section. 

\begin{lemma}\label{Mel-lemma}
Let $\SE$ be a Serre subcategory. 
For each prime ideal $\p$ of $R$ such that $\height\, \p>0$ and $\p \not \in \Supp_{R}(\SE)$, 
a local cohomology module $H^{\height \p}_{\p}(R) $ is not in $\FG*\SE$.  
\end{lemma}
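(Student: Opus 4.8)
The plan is to argue by contradiction, using localization at $\p$ to strip away the $\SE$-part of a putative extension and thereby confront a finitely generated module with a top local cohomology module that cannot be finitely generated. So suppose $H^{\height \p}_{\p}(R)$ lies in $\FG*\SE$. By definition there is a short exact sequence
\[ 0 \to F \to H^{\height \p}_{\p}(R) \to S \to 0 \]
with $F \in \FG$ and $S \in \SE$. Set $c=\height\, \p$, so $c>0$ by hypothesis.

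First I would localize the sequence at $\p$. Since $S \in \SE$ and $\p \not\in \Supp_{R}(\SE)$, we have $S_{\p}=0$, and the localized sequence collapses to an isomorphism $F_{\p} \cong \bigl( H^{c}_{\p}(R) \bigr)_{\p}$. Because local cohomology commutes with localization, and $\p R_{\p}$ is the maximal ideal of the local ring $R_{\p}$ with $c=\height\, \p=\dim R_{\p}$, the right-hand side is the \emph{top} local cohomology $H^{\dim R_{\p}}_{\p R_{\p}}(R_{\p})$. On the other hand $F_{\p}$ is a finitely generated $R_{\p}$-module. Everything thus reduces to the claim that the top local cohomology of the local ring $R_{\p}$, whose dimension $c$ is positive, is not finitely generated.

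The main obstacle is precisely this non-finiteness assertion, which is where the positivity $c>0$ is consumed. The route I would take is elementary. By Grothendieck's non-vanishing theorem $H^{c}_{\p R_{\p}}(R_{\p}) \neq 0$, while Grothendieck's vanishing theorem gives $H^{i}_{\p R_{\p}}(N)=0$ for $i>\dim N$. Since $\dim R_{\p}=c\geqq 1$, prime avoidance produces an element $x \in \p R_{\p}$ avoiding the minimal primes of $R_{\p}$ of maximal dimension, so that $\dim R_{\p}/xR_{\p}<c$ and hence $H^{c}_{\p R_{\p}}(R_{\p}/xR_{\p})=0$. Splitting the multiplication-by-$x$ map $R_{\p}\xrightarrow{x}R_{\p}$ through $xR_{\p}$ and feeding the two resulting short exact sequences into the long exact sequence of local cohomology — using the vanishing of $H^{c}_{\p R_{\p}}(R_{\p}/xR_{\p})$ and of $H^{c+1}_{\p R_{\p}}$ above the top degree — shows that multiplication by $x$ is surjective on $H^{c}_{\p R_{\p}}(R_{\p})$. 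If this module were finitely generated, Nakayama's lemma with $x \in \p R_{\p}$ would force it to vanish, contradicting Grothendieck non-vanishing; hence it is not finitely generated. This contradicts $F_{\p}\cong H^{c}_{\p R_{\p}}(R_{\p})$ with $F_{\p}$ finitely generated, completing the argument. As an alternative to the Nakayama step, one may invoke the Macdonald--Sharp description of the attached primes of top local cohomology: all attached primes have positive-dimensional residue quotient and so differ from $\p R_{\p}$, whence the module has no nonzero finite-length quotient and in particular is not finitely generated.
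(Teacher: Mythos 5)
Your proof is correct and follows essentially the same route as the paper: localize at $\p$ to kill $S$ and reduce to showing $H^{c}_{\p R_{\p}}(R_{\p})$ is not finitely generated, then combine surjectivity of multiplication by an element of $\p R_{\p}$ with Nakayama and Grothendieck non-vanishing. The only difference is cosmetic: the paper obtains a genuine non-zerodivisor by passing to $R_{\p}/\G_{\p R_{\p}}(R_{\p})$ (using that this does not change $H^{c}$), whereas you keep $R_{\p}$ itself and handle a possible zerodivisor by factoring multiplication by $x$ through $xR_{\p}$; both work.
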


\begin{proof}
Let $\p$ be a prime ideal of $R$ such that $t=\mathrm{ht}\, \p>0$ and $\p \not \in \Supp_{R}(\SE)$. 
We assume that $H^{t}_{\p}(R)$ is in $\FG*\SE$ and shall derive a contradiction. 
Our assumption implies that there exists a short exact sequence 
\[ 0 \to F \to H^{t}_{\p}(R) \to S \to 0 \]
of $R$-modules where $F$ is in $\FG$ and $S$ is in $\SE$. 
We apply the exact functor $(-) \otimes_{R} R_{\p}$ to the above short exact sequence. 
Since we have $\p \not \in \Supp_{R}(\SE)$, specially $\p$ does not belong to $\Supp_{R}(S)$, 
it holds $H^{t}_{\p R_{\p}}(R_{\p}) \cong H^{t}_{\p}(R) \otimes_{R} R_{\p} \cong F_{\p}$ by the flat base change theorem. 
Consequently, we see that $H^{t}_{\p R_{\p}}(R_{\p})$ is a finitely generated $R_{\p}$-module. 

Let $M=R_{\p}/\G_{\p R_{\p}}(R_{\p})$. 
The module $M$ is a $\p R_{\p}$-torsion-free finitely generated $R_{\p}$-module. 
We use \cite[2.1.1 Lemma (ii)]{BS} to deduce that $\p R_{\p}$ contains a non-zerodivisor $x$ on $M$. 
The short exact sequence 
\[ 0 \to M \overset{x}{\to} M  \to M/xM \to 0  \]
induces an exact sequence of local cohomology modules
\[ H^{t}_{\p R_{\p}} ( M ) \overset{x}{\to} H^{t}_{\p R_{\p}} ( M ) \to H^{t}_{\p R_{\p}} \left( M/xM \right). \]
The module $M/xM$ has $\dim M/xM\leqq t-1$ and thus $H^{t}_{\p R_{\p}}\left( M/xM \right)=0$ by  Grothendieck's vanishing theorem. 
Therefore, the above exact sequence yields $H^{t}_{\p R_{\p}} ( M )=xH^{t}_{\p R_{\p}} ( M )$. 
By \cite[2.1.7 Corollay (iii)]{BS}, we have 
\[ H^{t}_{\p R_{\p}} ( M )  = H^{t}_{\p R_{\p}} \left( R_{\p}/\G_{\p R_{\p}}(R_{\p}) \right) \cong H^{t}_{\p R_{\p}} ( R_{\p} ).\] 
Since the above argument implies that $H^{t}_{\p R_{\p}} ( M )$ is a finitely generated $R_{\p}$-module,  
Nakayama's lemma says $H^{t}_{\p R_{\p}} ( R_{\p} ) \cong  H^{t}_{\p R_{\p}} ( M )=0$. 
However, we note $\dim R_{\p}=t$, this equality contradicts to $H^{t}_{\p R_{\p}} \left( R_{\p} \right) \neq 0$ by the Grothendieck non-vanishing theorem. 
Thus we conclude that  $H^{t}_{\p}(R)$ is not in $\FG*\SE$. 
\end{proof}

The following result is useful to observe members of $\MP[\FG * \SE ]$ for a Serre subcategory $\SE$. 
 
\begin{proposition}\label{NC-C_{p}}
Let $\SE$ be a Serre subcategory and $\p$ be a prime ideal of $R$ with $\height\, \p>0$. 
If a Serre subcategory $\FG *\SE$ satisfies the Melkersson condition $(C_{\p})$, 
then $\p$ belongs to  $\Supp_{R}(\SE)$. 
In particular, it holds 
\[ \MP[ \FG *\SE ]_{\geqq 1} \subseteqq \Supp_{R}(\SE)_{\geqq 1}.  \]
\end{proposition}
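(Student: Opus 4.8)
The plan is to prove the contrapositive of the first assertion; the ``in particular'' statement is then immediate, since any $\p \in \MP[\FG*\SE]_{\geqq 1}$ has $\height\,\p \geqq 1 > 0$ and makes $\FG*\SE$ satisfy $(C_{\p})$, so the first assertion places it in $\Supp_{R}(\SE)$, hence in $\Supp_{R}(\SE)_{\geqq 1}$. So I suppose that $\p$ is a prime with $t := \height\,\p > 0$ and $\p \not\in \Supp_{R}(\SE)$, and aim to show that $\FG*\SE$ does \emph{not} satisfy the Melkersson condition $(C_{\p})$.

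The test module I would use is the local cohomology module $M = H^{t}_{\p}(R)$. Two of the three ingredients needed to violate $(C_{\p})$ are already at hand: first, $M$ is $\p$-torsion, so $\G_{\p}(M) = M$ automatically; second, Lemma \ref{Mel-lemma} gives precisely that $M = H^{t}_{\p}(R) \not\in \FG*\SE$, using $t > 0$ and $\p \not\in \Supp_{R}(\SE)$. Thus, once I verify that $(0 :_{M} \p)$ lies in $\FG*\SE$, the module $M$ witnesses the failure of $(C_{\p})$: otherwise $(C_{\p})$ would force $M \in \FG*\SE$, contradicting Lemma \ref{Mel-lemma}. Everything therefore reduces to the single claim that $(0 :_{H^{t}_{\p}(R)} \p) \in \FG*\SE$.

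To attack this claim, observe that since $\p \not\in \Supp_{R}(\SE)$ every module in $\SE$ vanishes after localizing at $\p$, so it would suffice to show that the $R/\p$-module $N := (0 :_{H^{t}_{\p}(R)} \p) = \Hom_{R}(R/\p, H^{t}_{\p}(R))$ is finitely generated, which would place it in $\FG \subseteqq \FG*\SE$. I would approach its finiteness by localizing at $\p$: by the flat base change theorem $H^{t}_{\p}(R)_{\p} \cong H^{t}_{\p R_{\p}}(R_{\p})$, and since $\height\,\p = \dim R_{\p} = t$, this is the \emph{top} local cohomology module of the local ring $R_{\p}$, hence Artinian. Consequently its socle $N_{\p} = \Hom_{R_{\p}}\!\left( \kappa(\p), H^{t}_{\p R_{\p}}(R_{\p}) \right)$ is a finite-dimensional $\kappa(\p)$-vector space, so $N$ has finite rank over the domain $R/\p$.

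The main obstacle is the passage from finite rank to global finite generation of $N$ over $R/\p$, since finite rank alone is insufficient (the fraction field of $R/\p$ is a rank-one, non-finitely-generated example, realized for instance as the socle of $E_{R}(R/\p)$, which is exactly why one must use $H^{t}_{\p}(R)$ and not $E_{R}(R/\p)$ as the test module). Here I would exploit that $N$ is the socle of $H^{t}_{\p}(R)$ with $t = \height\,\p$, not of an arbitrary $\p$-torsion module: the plan is to control $\Ass_{R}\!\left(H^{t}_{\p}(R)\right) \cap V(\p)$ and to compare $N$ with the finitely generated modules $\Hom_{R}(R/\p, \Ext^{t}_{R}(R/\p^{n}, R))$ arising from the presentation $H^{t}_{\p}(R) = \varinjlim_{n} \Ext^{t}_{R}(R/\p^{n}, R)$, showing that this system stabilizes onto $N$; equivalently, one may invoke the known finiteness of $\Hom_{R}(R/\p, H^{\height\,\p}_{\p}(R))$ for a height-$t$ prime. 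This finiteness of the socle of the ``top-relative-to-$\p$'' local cohomology module is the delicate point of the whole argument; granting it, $N \in \FG$ and the proof concludes as indicated above.
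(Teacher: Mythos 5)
Your reduction is sound as far as it goes: taking $M=H^{t}_{\p}(R)$ with $t=\height\,\p$, you get $\G_{\p}(M)=M$ for free and $M\not\in\FG*\SE$ from Lemma \ref{Mel-lemma}, so everything hinges on the single claim $(0:_{M}\p)\in\FG*\SE$. That claim is exactly where the argument breaks down, and it is not a known fact in the generality you need. What you actually establish is only that $(0:_{M}\p)_{\p}$ is a finite-dimensional $\kappa(\p)$-vector space (from the Artinianness of $H^{t}_{\p R_{\p}}(R_{\p})$), i.e.\ that $N=(0:_{M}\p)$ has finite rank over the domain $R/\p$ --- and, as you note yourself, finite rank does not give finite generation. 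The ``known finiteness of $\Hom_{R}(R/\p,H^{\height\,\p}_{\p}(R))$'' you invoke is a theorem (Brodmann--Lashgari, Khashyarmanesh--Salarian) only under the hypothesis that $H^{i}_{\p}(R)$ is finitely generated for all $i<t$, which holds when $t=\mathrm{grade}(\p,R)$ (e.g.\ for Cohen--Macaulay rings) but not for a general noetherian ring, where $\mathrm{grade}(\p,R)<\height\,\p$ is possible and the intermediate local cohomologies need not be finitely generated. In that generality the finiteness of the socle of $H^{\height\,\p}_{\p}(R)$ is not available (it belongs to Hartshorne's circle of cofiniteness problems, where even finiteness of $\Ass$ fails for nearby cohomological indices). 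Your fallback --- comparing $N$ with $\Hom_{R}(R/\p,\Ext^{t}_{R}(R/\p^{n},R))$ and ``showing the system stabilizes'' --- is a restatement of the same unproved finiteness, not an argument.

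The paper avoids this obstruction entirely by running the implication in the opposite direction: since $\Ext^{i}_{R}(R/\p,R)$ is finitely generated, hence in $\FG\subseteq\FG*\SE$, for all $i$, the hypothesis that $\FG*\SE$ satisfies $(C_{\p})$ yields $H^{i}_{\p}(R)\in\FG*\SE$ for all $i$ by \cite[Theorem 2.9]{AM-2008}, whose proof is an induction that bootstraps membership of all the $H^{i}_{\p}(R)$ in the subcategory simultaneously; one never has to identify $(0:_{H^{t}_{\p}(R)}\p)$ or prove it finitely generated. Lemma \ref{Mel-lemma} then gives the contradiction exactly as you intend. If you wish to keep the contrapositive framing with an explicit test module, you must either restrict to the case $\height\,\p=\mathrm{grade}(\p,R)$ or replace the socle-finiteness claim by the inductive mechanism of \cite{AM-2008}.
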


\begin{proof}
We note that $\Ext^{i}_{R}(R/\p, R)$ is in $\FG \subseteq \FG *\SE$ for all integers $i$. 
Since $\FG * \SE$ satisfies the Melkersson condition $(C_{\p})$, 
we see that $H^{i}_{\p}(R)$ is in $\FG * \SE$ for all integers $i$ by \cite[Theorem 2.9 (ii) $\Rightarrow$(i)]{AM-2008}. 
In particular, $H^{\height \p}_{\p}(R)$ is in $\FG * \SE$. 
Consequently, Lemma \ref{Mel-lemma} yields that $\p$ belongs to $\Supp_{R}(\SE)$. 
\end{proof}

\begin{remark}\label{MPcontained} 
Let $\SE$ be a Serre subcategory. 

\noindent
(1)\, It does not necessarily hold $\MP[ \FG * \SE ] \subseteqq \Supp(\SE)$.     
Indeed, we have already shown that $\FG *\AR$ is closed under taking injective hulls over a $1$-dimensional semi-local ring $R$ in \cite[Theorem 3.5]{Y-2016}.  
Therefore this subcategory is a Melkersson subcategory, namely, one has $\MP[ \FG * \AR ] =\Spec(R)$.  
However, non-maximal prime ideals of $R$ do not belong to $\Supp_{R}(\AR)$. 

\noindent
(2)\, It holds $\MP[ \FG * \SE ] \subseteqq \Min (R) \cup \Supp(\SE)$. 
In fact, the above proposition implies 
\begin{align*}
\MP[\FG * \SE ] 
&\subseteqq \Min(R)\cup \MP[ \FG * \SE ]_{\geqq1} \\
&\subseteqq \Min(R) \cup \Supp_{R}(\SE)_{\geqq 1}\\
&= \Min(R) \cup \Supp_{R}(\SE). 
\end{align*}

\noindent
In particular, if we consider the case of the zero subcategory $\SE=\{0 \}$, then we have 
\[ \MP[\FG] =\MP[ \FG * \{ 0 \} ] \subseteqq \Min (R) \cup\Supp_{R}(\{ 0 \} )=\Min(R).\] 
\end{remark}

\vspace{3pt}
%
%
%
Now we can give the main result of this section. 
Using the following theorem, if a Serre subcategory $\M$ is closed under taking injective hulls, then we can observe easily all prime ideals with positive height of $\MP[ \FG * \M ]$. 

\begin{theorem}\label{Characterization-Mel}
Let $\M$ be a Serre subcategory with the closedness of taking injective hulls. 
Then one has $\MP[ \FG *\M ] \supseteqq \Supp_{R}(\M)$. 
In particular, it holds 
\[ \MP[ \FG * \M ]_{\geqq 1}=\Supp_{R}(\M)_{\geqq 1}. \] 
\end{theorem}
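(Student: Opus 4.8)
The plan is to prove the inclusion $\Supp_{R}(\M) \subseteqq \MP[\FG*\M]$; the asserted equality in height $\geqq 1$ then follows immediately by intersecting with $\Spec(R)_{\geqq 1}$ and combining with the reverse inclusion $\MP[\FG*\M]_{\geqq 1} \subseteqq \Supp_{R}(\M)_{\geqq 1}$ already provided by Proposition \ref{NC-C_{p}}. So the real content is: for each $\p \in \Supp_{R}(\M)$, the Serre subcategory $\FG*\M$ satisfies the Melkersson condition $(C_{\p})$.

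First I would extract the only consequence of $\p \in \Supp_{R}(\M)$ that I actually use, namely that $E_{R}(R/\q) \in \M$ for every $\q \in V(\p)$. Indeed, choose $N \in \M$ with $N_{\p} \neq 0$; then $\Ass_{R_{\p}}(N_{\p})$ is nonempty, so there is $\q_{0} \in \Ass_{R}(N)$ with $\q_{0} \subseteqq \p$. Hence $R/\q_{0}$ embeds in $N$, giving $R/\q_{0} \in \M$, and the surjection $R/\q_{0} \twoheadrightarrow R/\p$ yields $R/\p \in \M$ since $\M$ is a Serre subcategory. For any $\q \supseteqq \p$ the module $R/\q$ is a quotient of $R/\p$, so $R/\q \in \M$, and closedness under injective hulls then gives $E_{R}(R/\q) \in \M$.

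Now take any $R$-module $M$ with $\G_{\p}(M)=M$ and $(0:_{M}\p) \in \FG*\M$; write $N=(0:_{M}\p)$ and fix a short exact sequence $0 \to F \to N \to S \to 0$ with $F \in \FG$ and $S \in \M$, regarding $F$ as a submodule of $N$ with $S \cong N/F$. Since $\p N=0$, the module $F$ is finitely generated with $\Ass_{R}(F) \subseteqq V(\p)$, so $E_{R}(F)$ is a \emph{finite} direct sum of indecomposable injectives $E_{R}(R/\q_{i})$ with $\q_{i} \in V(\p)$; by the previous paragraph each summand is in $\M$, hence $E_{R}(F) \in \M$. The decisive step is to treat $N$ without invoking closedness under arbitrary direct sums: choose, by Zorn's lemma, a submodule $A' \subseteqq N$ maximal with $F \cap A'=0$, so that $F \oplus A'$ is essential in $N$ and therefore $E_{R}(N)=E_{R}(F) \oplus E_{R}(A')$. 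The composite $A' \hookrightarrow N \twoheadrightarrow N/F \cong S$ is injective, so $A' \in \M$ and thus $E_{R}(A') \in \M$; consequently $E_{R}(N) \in \M$.

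Finally, because $\G_{\p}(M)=M$, the socle $N=(0:_{M}\p)$ is essential in $M$: for $0 \neq x \in M$, if $n \geqq 1$ is least with $\p^{n}x=0$, then $\p^{n-1}x$ is a nonzero submodule contained in $Rx \cap N$. Hence $E_{R}(M)=E_{R}(N) \in \M$, and since $\M$ is closed under submodules with $M \subseteqq E_{R}(M)$, we get $M \in \M \subseteqq \FG*\M$. Thus $\FG*\M$ satisfies $(C_{\p})$, which gives $\p \in \MP[\FG*\M]$ and the desired inclusion. The single point demanding care, and the reason the hypothesis on $\M$ is essential, is precisely this decomposition: $E_{R}(N)$ is a priori an infinite direct sum of modules $E_{R}(R/\q)$ with $\q \supseteqq \p$, so one cannot conclude $E_{R}(N) \in \M$ term by term; splitting off the finitely generated part $F$ (whose injective hull is a finite sum) and confining the remaining essential complement $A'$ inside $S \in \M$ is exactly what makes the argument succeed.
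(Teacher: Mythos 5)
Your proof is correct and follows essentially the same route as the paper's: reduce to showing $E_{R}\bigl((0:_{X}\p)\bigr)\in\M$ by splitting off the finitely generated part $F$ (whose injective hull is a finite sum of $E_{R}(R/\q)$ with $\q\in V(\p)\subseteqq\Supp_{R}(\M)$) and absorbing the rest into $\M$ via closedness under injective hulls, then descending to $X$ as a submodule. The only difference is cosmetic: you make explicit, via the essential complement $A'$, the standard fact that $E_{R}(N)$ is a direct summand of $E_{R}(F)\oplus E_{R}(S)$, which the paper asserts without proof.
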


\begin{proof}
Let $\p$ be a prime ideal in $\Supp_{R}(\M)$.  
We suppose $\G_{\p}(X)=X$ and that $(0 :_{X} \p)$ is in $\FG* \M$ for an $R$-module $X$. 
We shall see that $X$ is in $\FG *\M$.  
By the definition of $\FG*\M$, 
there exists a short exact sequence 
\[ 0 \to F \to (0:_{X} \p) \to M \to 0 \]
of $R$-modules where $F$ is in $\FG$ and $M$ is in $\M$. 
Since $\G_{\p}(X)=X$ implies $E_{R}(X)=E_{R}\left( (0:_{X} \p) \right)$, the module $E_{R}(X)$ is a direct summand of $E_{R}(F) \oplus E_{R}(M)$. 
Note that $E_{R}(F)$ is  a finite direct sum of copies of  injective $R$-modules  $E_{R}(R/\q)$ where $\q \in \Ass_{R}(F)$. 

Here, let us show $R/\q$ is in $\M$ for each $\q \in \Ass_{R}(F)$. 
By the above short exact sequence and  the assumption of $\p \in \Supp_{R}(\M)$, one has 
\[\q \in \Ass_{R}(F) \subseteqq \Ass_{R}\left( (0:_{X} \p) \right) =V(\p) \cap \Ass_{R}(X) \subseteqq V(\p) \subseteqq \Supp_{R}(\M). \]
Thus, there exists an $R$-module $N \in \M$ such that $\q \in \Supp_{R}(N)$. 
We take a prime ideal $\q' \in \Min_{R}(N)$ which is contained in $\q$. 
Then $\q'$ is an associated prime ideal of $N$, and therefore $R/\q'$ is embedded in $N$. 
Since $\M$ is closed under taking submodules, we see that $R/\q'$ is in $\M$. 
Moreover, there is an epimorphism from $R/\q'$ to $R/\q$. 
Hence, the closedness of taking quotient modules for $\M$ implies that $R/\q$ is in $\M$.

Since $\M$ is closed under taking injective hulls and extension modules, 
we can conclude that $E_{R}(F) \oplus E_{R}(M)$ is in $\M$,  whence in $\FG* \M$. 
Finally, $X$ is in $\FG * \M$ by the closedness of taking submodules for $\FG * \M$. 
Consequently, we see that $\p$ belongs to $\MP[ \FG * \M ]$. 

As a result of the above argument, we deduce $\MP[ \FG * \M]_{\geqq 1} \supseteqq \Supp_{R}(\M)_{\geqq 1}$. 
By combining Proposition \ref{NC-C_{p}}, we can obtain $\MP[ \FG* \M ]_{\geqq 1} = \Supp_{R}(\M)_{\geqq 1}$. 
\end{proof}

In the next example, we will investigate members of $\MP[ \FG * \M ]$ under some assumptions. 

\begin{example}\label{example-MP}
Let $\M$ be a Serre subcategory with the closedness of taking injective hulls. 

\noindent 
(1)\, If $\dim R=0$, we have already seen that any Serre subcategory is a Melkersson subcategory in  Remark \ref{MP-0-dim}.  
Thus we have \[ \MP[ \FG * \M]=\Spec(R).\] 

\noindent 
(2)\, For a ring $R$ with $\dim R \geqq 1$, Remark \ref{MPcontained} (2) implies 
$\MP[\FG * \M ] \subseteqq  \Min(R) \cup \Supp_{R}(\M)$. 
Additionally, if we suppose that $\Min(R)$ has a unique prime ideal, 
then Lemma \ref{lemma-MP} (1) and Theorem \ref{Characterization-Mel} deduce $\Min(R) \cup \Supp_{R}(\M)  \subseteqq \MP[ \FG * \M ]$. 
Consequently, we can completely decide members of $\MP[ \FG * \M ]$ over a ring $R$ with $\dim R \geqq 1$ and $\Min(R)=\{ \p \}$ as follows: 
\[ \MP[\FG * \M ]=\{ \p \} \cup \Supp_{R}(\M).\]

\noindent
In particular, we can give the following examples:  
\begin{enumerate}
\item[(a)]\, One has $\MP[ \FG * \AR ]=\{ (0) \}\cup \Max(R)$ over a domain $R$.  

\item[(b)]\, One has $\MP[ \FG * \FS ] =\{ (0)\} \cup \{ \p \in \Spec(R) \mid \dim R/\p \leqq 1\}$ over a semi-local domain $R$. 

\item[(c)]\, One has $\MP[ \FG * \M_{W}] = \{ (0) \} \cup W$ over a domain $R$ where $\M_{W}=\{ M \in \RMod \mid \Supp_{R}(M) \subseteqq W \}$ is a Serre subcategory corresponding to a specialization closed subset $W$ of $\Spec(R)$. 
\end{enumerate}
\end{example}

\begin{remark}\label{the exsistence of injection}
(1)\, There exists a bijection  
\[\left\{ W \rmid| \begin{matrix} W \text{ is a specialization } \cr \text{ closed subset of } \Spec(R)\cr \end{matrix} \right\} 
\ \overset{\Phi}{\rightarrow}  \ 
\left\{ \FG * \M \rmid| \begin{matrix} \M \text{ is a Serre subcategory }  \cr \text{ with the closedness of  } \cr \text{ taking arbitrary  direct sums} \end{matrix} \right\} 
 \] 
where $\Phi$ is defined by $\Phi(W)=\FG*\M_{W}=\FG*\{ M \in \RMod \mid  \Supp_{R}(M) \subseteqq W \}$ for a specialization closed subset $W$ of $\Spec(R)$.  
In other words, the map $\Phi$ gives an injective map from the set of specialization subsets of $\Spec(R)$ to the set of Serre subcategories containing $\FG$.   

Indeed, we have already proved that $\FG*\M_{W}$ is a Serre subcategory in \cite[Corollary 3.3]{Y-2012}. 
By Gabriel's classification theorem in \cite{G-1962}, if $\M$ is a Serre subcategory with the closedness of taking arbitrary direct sums, then $\M$ is represented as $\M_{W}$ for a specialization closed subset $W$ of $\Spec(R)$. 
This fact implies that the map $\Phi$ is surjective. 
Next, we shall see the injectivity of $\Phi$. 
Let us take specialization closed subsets $W_{1}$ and $W_{2}$ of $\Spec(R)$ such that  $W_{1} \not= W_{2}$. 
We may suppose that there exists a prime ideal $\p \in W_{1} \setminus W_{2}$. 
Then $ \FG*\M_{W_{1}}$ has an infinite direct sum of copies of injective $R$-module $E_{R}(R/\p)$, and we denote this module by $X$. 
Meanwhile, if we assume that $X$  is in $\FG*\M_{W_{2}}$, then there exists a short exact sequence 
\[ 0 \to F \to X \to M \to 0 \]
of $R$-modules where $F$ is in $\FG$ and $M$ is in $\M_{W_{2}}$. 
Therefore, the injective $R$-module $X$ is a direct summand of $E_{R}(F) \oplus E_{R}(M)$. 
However, this is not possible because $E_{R}(F)$ is a finite direct sum of copies of indecomposable injective $R$-modules and $E_{R}(R/\p) \not\in \M_{W_{2}}$. 
Hence one has $\FG*\M_{W_{1}} \not =\FG*\M_{W_{2}}$, namely, we obtain $\Phi(W_{1}) \not = \Phi (W_{2})$. 

\noindent
(2)\, Let $R$ be a ring with a unique minimal prime ideal $\p$. 
We consider a map $\Psi( - )=\MP[ - ] \setminus \{ \p \} $ from the set of Serre subcategories to the set of subsets of $\Spec(R)$. 
If a Serre subcategory $\M$ is closed under taking arbitrary direct sums, then it is closed under taking injective hulls. 
Therefore Example \ref{example-MP} says that the maps $\Phi$ and  $\Psi$ give a one-to-one correspondence 
\[ 
\left\{ W \rmid| \begin{matrix} W \text{ is a specialization } \cr \text{ closed  subset of } \Spec(R) \cr \text{ with } W \not= \Spec(R)\cr \end{matrix} \right\} 
\ \begin{matrix} \overset{\Phi}{\rightarrow} \cr \underset{\Psi}{\leftarrow} \end{matrix} \ 
\left\{ \FG * \M \rmid| \begin{matrix} \M \text{ is a Serre subcategory }  \cr \text{ with the closedness of } \cr \text{ taking arbitrary  direct } \cr \text { sums with } \M \not = \RMod \end{matrix} \right\}.  \] 
\end{remark}

\section{Necessary and sufficient conditions to be a Melkersson subcategory for $\FG*\M$}
As an application of Theorem \ref{Characterization-Mel}, we gives necessary and sufficient conditions for $\FG*\M$ where $\M$ is a Serre subcategory with the closedness of taking injective hulls. 
In particular, we will give characterization to be Melkersson subcategory for subcategories of consisting of Minimax modules $\FG*\AR$ and FSF modules $\FG*\FS$. 

%
%
%
\begin{theorem}\label{NS-M}
Let $\M$ be a Serre subcategory with the closedness of taking injective hulls. 
Then the following conditions are equivalent:
\begin{enumerate}
\item\, $\FG * \M$ is a Melkersson subcategory; 

\item\, Exactly one of the following two conditions holds: 
\begin{enumerate}
\item[(a)]\, One has $\dim R=0$; 

\item[(b)]\, All prime ideals $\p$ of $R$ with $\height\, \p=1$ belong to $\Supp_{R}(\M)$. 
\end{enumerate}
\end{enumerate}
\end{theorem}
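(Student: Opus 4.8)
The plan is to translate the statement about Melkersson subcategories into a statement about the set $\MP[\FG*\M]$ via Remark \ref{remark-MP} (2): the subcategory $\FG*\M$ is a Melkersson subcategory if and only if $\MP[\FG*\M]=\Spec(R)$. I would then split $\Spec(R)$ into the minimal (height-zero) primes and $\Spec(R)_{\geqq 1}$, handling the positive-height part with Theorem \ref{Characterization-Mel}, which already identifies $\MP[\FG*\M]_{\geqq 1}$ with $\Supp_R(\M)_{\geqq 1}$, and recovering the minimal primes afterwards.

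First I would dispose of the case $\dim R=0$, which corresponds to alternative (a): here every prime is minimal, so Remark \ref{MP-0-dim} immediately gives $\MP[\FG*\M]=\Spec(R)$, and $\FG*\M$ is automatically a Melkersson subcategory. Hence I may assume $\dim R\geqq 1$ and reduce to showing that, in this regime, condition (1) is equivalent to alternative (b). For the implication (b) $\Rightarrow$ (1), assuming that every prime $\p$ with $\height\,\p=1$ lies in $\Supp_R(\M)$, I would first upgrade this to $\Spec(R)_{\geqq 1}\subseteqq \Supp_R(\M)$. This rests on two facts: $\Supp_R(\M)$ is specialization closed because $\M$ is a Serre subcategory, and every prime of positive height contains a prime of height exactly one (localize at $\p$ and apply Krull's principal ideal theorem inside a suitable minimal-prime quotient of $R_{\p}$). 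Theorem \ref{Characterization-Mel} then yields $\MP[\FG*\M]_{\geqq 1}=\Supp_R(\M)_{\geqq 1}=\Spec(R)_{\geqq 1}$, so $\MP[\FG*\M]$ contains $\Spec(R)_{\geqq 1}$, and Lemma \ref{lemma-MP} (2) forces $\MP[\FG*\M]=\Spec(R)$, which is (1). Conversely, for (1) $\Rightarrow$ (b), if $\MP[\FG*\M]=\Spec(R)$ then intersecting with $\Spec(R)_{\geqq 1}$ and invoking Theorem \ref{Characterization-Mel} gives $\Spec(R)_{\geqq 1}=\MP[\FG*\M]_{\geqq 1}=\Supp_R(\M)_{\geqq 1}$; in particular every height-one prime belongs to $\Supp_R(\M)$, which is (b).

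The step I expect to be the main obstacle is the passage across the minimal primes. Theorem \ref{Characterization-Mel} controls only primes of positive height, so the behaviour of $\FG*\M$ at height-zero primes is not visible from $\Supp_R(\M)$ alone, and a direct analysis of the condition $(C_{\p})$ for a minimal $\p$ would be awkward. The device that closes this gap is Lemma \ref{lemma-MP} (2), whose primary-decomposition argument lets positive-height coverage of $\MP[\FG*\M]$ propagate down to the minimal primes; recognizing that this lemma is exactly the tool needed is the crux of the argument. The only other point requiring care is the reduction of (b) to a statement about all of $\Spec(R)_{\geqq 1}$, which, as noted above, rests on the specialization-closedness of $\Supp_R(\M)$ together with the existence of a height-one prime below every prime of positive height.
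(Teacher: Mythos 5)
Your proposal is correct and follows essentially the same route as the paper: reduce to computing $\MP[\FG*\M]$, use Theorem \ref{Characterization-Mel} (equivalently Proposition \ref{NC-C_{p}} for the forward direction) together with the specialization-closedness of $\Supp_R(\M)$ to handle $\Spec(R)_{\geqq 1}$, and invoke Lemma \ref{lemma-MP} (2) to propagate coverage down to the minimal primes. You correctly identify that lemma as the crux, and your extra care about height-one primes lying below every positive-height prime is a detail the paper leaves implicit.
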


\begin{proof}
(1) $\Rightarrow$ (2): We suppose that $\dim R>0$. 
By Proposition \ref{NC-C_{p}}, we see that our assertion holds. 

\noindent
(2) $\Rightarrow$ (1): 
If $R$ is a $0$-dimensional ring, then all Serre subcategories are Melkersson subcategory by Remark \ref{MP-0-dim}. 
Next, we suppose $\dim R>0$. 
Since the support of module is a specialization closed subset of $\Spec(R)$, the assumption (b) implies $ \Spec(R)_{\geqq 1} \subseteqq \Supp_{R}(\M)_{\geqq 1}$. 
Therefore, Theorem \ref{Characterization-Mel} yields 
\[ \, \Spec(R)_{\geqq 1} = \Supp_{R}(\M)_{\geqq 1}=\MP[ \FG * \M ]_{\geqq 1} \subseteqq \MP[ \FG * \M ].\]
Consequently, Lemma \ref{lemma-MP} (2) deduces $\MP[ \FG * \M ]=\Spec(R)$, and thus $\FG * \M$ is a Melkersson subcategory. 
\end{proof}

\begin{corollary}\label{Mel-Minimax-FSF}
Let $R$ be a ring. 
\begin{enumerate}
\item\, The following conditions are equivalent:
	\begin{enumerate}
	\item\, $\FG *\AR$ is a Melkersson subcategory;
	
	\item\, $\dim R \leqq 1$. 
	\end{enumerate}
	
\item\, If $R$ is a semi-local ring, then the following conditions are equivalent:
	\begin{enumerate}
	\item\, $\FG * \FS$ is a Melkersson subcategory;
	
	\item\, $\dim R \leqq 2$. 
	\end{enumerate}
\end{enumerate}
\end{corollary}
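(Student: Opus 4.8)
The plan is to derive both equivalences directly from Theorem \ref{NS-M} by computing, in each case, exactly which prime ideals of height $1$ lie in $\Supp_R(\M)$, where $\M=\AR$ in part (1) and $\M=\FS$ in part (2). The unifying strategy is that condition (2)(a) of Theorem \ref{NS-M} ($\dim R=0$) and condition (2)(b) (all height-one primes lie in $\Supp_R(\M)$) together must be shown to collapse into a single clean dimension bound on $R$; the bookkeeping amounts to understanding the support of each specific Melkersson subcategory.

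For part (1), the key observation is that $\Supp_R(\AR)\subseteqq\Max(R)$, since every Artinian module has support consisting of maximal ideals. Thus a height-one prime $\p$ belongs to $\Supp_R(\AR)$ if and only if $\p$ is maximal, i.e. $\dim R/\p=0$. The plan is to argue: if $\dim R\leqq 1$, then every height-one prime is maximal, so either $\dim R=0$ (giving (2)(a)) or every height-one prime satisfies (2)(b) vacuously-or-directly, and Theorem \ref{NS-M} yields that $\FG*\AR$ is Melkersson. Conversely, if $\dim R\geqq 2$, I would produce a height-one prime $\p$ that is not maximal (take a saturated chain $\p_0\subsetneq\p\subsetneq\m$ of length $2$), whence $\p\notin\Max(R)=\Supp_R(\AR)$, so (2)(b) fails while (2)(a) fails too, and Theorem \ref{NS-M} shows $\FG*\AR$ is not Melkersson.

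For part (2), the analogous computation is that $\Supp_R(\FS)=\{\p\in\Spec(R)\mid \dim R/\p\leqq\text{(something finite)}\}$; more precisely, a prime $\p$ lies in the support of some finite-support module iff the locus $V(\p)$ can be squeezed into a finite set, which over a semi-local ring means $\dim R/\p\leqq 1$ (the finite-support condition forces all but finitely many specializations of $\p$ to vanish, and in a semi-local ring the generic-to-maximal chains through $\p$ are controlled). So a height-one prime $\p$ lies in $\Supp_R(\FS)$ precisely when $\dim R/\p\leqq 1$. The plan is then to show this holds for every height-one prime exactly when $\dim R\leqq 2$: if $\dim R\leqq 2$, a height-one prime has $\dim R/\p\leqq\dim R-\height\,\p\leqq 1$, giving (2)(b); if $\dim R\geqq 3$, a saturated chain of length $3$ produces a height-one prime $\p$ with $\dim R/\p\geqq 2$, so $\p\notin\Supp_R(\FS)$ and Theorem \ref{NS-M} shows $\FG*\FS$ is not Melkersson.

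The main obstacle I anticipate is pinning down $\Supp_R(\FS)$ correctly, specifically the claim that over a semi-local ring $\p\in\Supp_R(\FS)$ iff $\dim R/\p\leqq 1$. The subtle direction is that a prime with $\dim R/\p\leqq 1$ actually \emph{is} in the support of a finite-support module: one wants to exhibit such a module, and the natural candidate is something built from $R/\p$ together with the finitely many maximal ideals above $\p$ (using that $V(\p)$ is a finite union of the chains $\p\subsetneq\m_i$, hence finite). The inequality $\dim R/\p\leqq\dim R-\height\,\p$, used freely above, requires $R$ to be catenary or at least that such saturated chains exist; since $R$ is only assumed noetherian, I would instead argue directly with the existence of a maximal ideal $\m$ with $\height\,\m=\dim R$ (semi-local rings have a maximal ideal of maximal height after localizing, or one works prime-by-prime) and extract the needed height-one prime from a saturated chain in $R_\m$, which is reliable since local rings admit saturated chains realizing the height. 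This localization step is where the semi-local hypothesis does real work and is the part I would write most carefully.
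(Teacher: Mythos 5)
Your proposal is correct and follows essentially the same route as the paper: both reduce to Theorem \ref{NS-M} via the identifications $\Supp_{R}(\AR)=\Max(R)$ and, over a semi-local ring, $\Supp_{R}(\FS)=\{ \p \in \Spec(R) \mid \dim R/\p \leqq 1\}$, together with the existence of a height-one prime $\p$ with $\dim R/\p \geqq 1$ (resp. $\geqq 2$) when $\dim R \geqq 2$ (resp. $\geqq 3$). Your only unnecessary detour is the worry about catenarity: the inequality $\height\, \p+\dim R/\p \leqq \dim R$ holds in any noetherian ring simply by concatenating a chain below $\p$ with a chain above $\p$, so no localization argument is required for that step.
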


\begin{proof}
(1) (a) $\Rightarrow$ (b): 
We assume $\dim R \geqq 2$. 
Then there exists a prime ideal $\p$ of $R$ with $\height\, \p =1$ and $\dim R/\p \geqq 1$. 
By Theorem \ref{NS-M}, we see that $\p$ belongs to $\Supp_{R}(\AR)=\Max(R)$. 
However, this is a contradiction. 

\noindent
(b) $\Rightarrow$ (a): 
We suppose $\dim R=1$ and let $\p$ be a prime ideal of $R$ with $\height\, \p=1$. 
Then $\p$ is a maximal ideal of $R$, and thus $R/\p$ is in $\AR$. 
This means that $\p$ belongs to $\Supp_{R} (\AR)$. 
Consequently, our assertion is proved by Theorem \ref{NS-M}.

\noindent
(2) (a) $\Rightarrow$ (b): 
We assume $\dim R \geqq 3$. 
Then there exists a prime ideal $\p$ of $R$ with $\height\, \p =1$ and $\dim R/\p \geqq 2$. 
Theorem \ref{NS-M} implies that $\p$ belongs to $\Supp_{R}(\FS)=\{ \q \in \Spec(R) \mid \dim R/\q \leqq 1 \}$.  
This contradicts to $\dim R/\p \geqq 2$. 

\noindent
(b) $\Rightarrow$ (a): 
We suppose $1 \leqq \dim R \leqq 2$ and let $\p$ be a prime ideal of $R$ with $\height\, \p=1$. 
Since $R$ is a semi-local ring, the $R$-module $R/\p$ has a finite support. 
Thus $\p$ belongs to $\Supp_{R}(\FS)$. 
It follows from Theorem \ref{NS-M} that $\FG*\FS$ is a Melkersson subcategory.  
\end{proof}

Finally, we shall observe the reason why a Melkersson subcategory is not necessary closed under taking injective hulls. 
 
\begin{corollary}\label{MNCUI}
Let $\M$ be a Serre subcategory with the closedness of taking injective hulls. 
Then the following conditions are equivalent: 
\begin{enumerate}
\item\, $\FG * \M$ is a Melkersson subcategory but not closed under taking injective hulls; 

\item\, The following two conditions hold: 
\begin{enumerate}
\item[(a)]\, If  there exists a prime ideal $\p$ of $R$ with $\height\, \p =1$, then $E_{R}(R/\p)$ is in $\M$. 

\item[(b)]\, There exists a minimal prime ideal $\q$ of $R$ such that $E_{R}(R/\q)$ is not in $\FG *  \M$. 
\end{enumerate}
\end{enumerate}
\end{corollary}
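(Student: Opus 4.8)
The plan is to reduce both halves of condition (1) to statements about which injective hulls $E_{R}(R/\p)$ lie in $\FG*\M$, and then to match these against (a) and (b) using Theorem~\ref{NS-M} and Theorem~\ref{Characterization-Mel}. The organizing observation is that, since $R/\p \in \FG \subseteq \FG*\M$ for every prime $\p$, closure of $\FG*\M$ under injective hulls is governed entirely by the modules $E_{R}(R/\p)$, and the Melkersson condition is governed by $\Supp_{R}(\M)$; the corollary will fall out once these two pictures are made precise and overlaid.

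First I would establish the bookkeeping fact that $\FG*\M$ is closed under taking injective hulls if and only if $E_{R}(R/\p)\in\FG*\M$ for every prime $\p$ of $R$. For the nontrivial direction, given $M\in\FG*\M$ with a presenting sequence $0\to F\to M\to S\to 0$ (where $F\in\FG$, $S\in\M$), the standard fact that $E_{R}(M)$ is a direct summand of $E_{R}(F)\oplus E_{R}(S)$ (already invoked in the proof of Theorem~\ref{Characterization-Mel}) reduces matters to the two summands. Here $E_{R}(S)\in\M$ by the hypothesis on $\M$, while $E_{R}(F)$ is a \emph{finite} direct sum of copies of $E_{R}(R/\q)$ with $\q\in\Ass_{R}(F)$, each of which is in $\FG*\M$ by assumption; Serre closure under finite direct sums and submodules then places $E_{R}(M)$ in $\FG*\M$. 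The converse is immediate.

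Next I would record two identifications. For \emph{every} prime $\p$ one has $E_{R}(R/\p)\in\M \iff \p\in\Supp_{R}(\M)$: the forward direction holds because $R/\p$ embeds in $E_{R}(R/\p)$, and the backward direction reuses the argument from the proof of Theorem~\ref{Characterization-Mel} that $\p\in\Supp_{R}(\M)$ forces $R/\p\in\M$, followed by closure of $\M$ under injective hulls. For primes of \emph{positive} height I would upgrade this to $E_{R}(R/\p)\in\FG*\M \iff \p\in\Supp_{R}(\M)$: localizing a presenting sequence at $\p$, if $\p\notin\Supp_{R}(\M)$ then $S_{\p}=0$ forces $F_{\p}\cong E_{R_{\p}}(k(\p))$, which is impossible since $\dim R_{\p}\geqq 1$ makes $E_{R_{\p}}(k(\p))$ non-finitely-generated while $F_{\p}$ is finitely generated; the converse is contained in the previous identification.

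Finally I would assemble the equivalence. By Theorem~\ref{NS-M}, the Melkersson condition in (1) amounts to $\dim R=0$ or ``every height-one prime lies in $\Supp_{R}(\M)$''; since $\Supp_{R}(\M)$ is specialization closed, the latter in fact drags every positive-height prime into $\Supp_{R}(\M)$. Under the Melkersson hypothesis with $\dim R\geqq 1$ (the case $\dim R=0$ being trivial, as there $E_{R}(R/\p)$ has finite length and always lies in $\FG$, cf.\ Remark~\ref{MP-0-dim}), the second step then gives $E_{R}(R/\p)\in\M$ for every positive-height $\p$, so by the first step the sole possible failure of closure under injective hulls is that some minimal prime $\q$ has $E_{R}(R/\q)\notin\FG*\M$. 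Reading this off: ``Melkersson'' together with the specialization-closed remark is exactly (a), and ``not closed under injective hulls'' (granting Melkersson) is exactly (b); conversely (a) yields the Melkersson condition through Theorem~\ref{NS-M}, while (b) directly witnesses failure of closure. The step I expect to be the main obstacle is the positive-height identification $E_{R}(R/\p)\in\FG*\M \iff \p\in\Supp_{R}(\M)$: the localization argument and the non-finite-generation of $E_{R_{\p}}(k(\p))$ over the positive-dimensional ring $R_{\p}$ are precisely what make the height hypothesis essential and cleanly separate the minimal primes of (b) from all the rest.
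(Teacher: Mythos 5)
Your proposal is correct and follows essentially the same route as the paper: Theorem~\ref{NS-M} supplies the Melkersson half, the direct-summand decomposition of $E_{R}(X)$ inside $E_{R}(F)\oplus E_{R}(S)$ reduces injective-hull closure to the indecomposable injectives $E_{R}(R/\p)$, and passing to a height-one prime below a given positive-height prime pushes everything except the minimal primes into $\M$. The only divergence is your second identification ($E_{R}(R/\p)\in\FG*\M \iff \p\in\Supp_{R}(\M)$ for $\height\,\p\geqq 1$, proved by localizing a presenting sequence), which is correct but is never actually needed in your final assembly, since condition (b) is stated directly in terms of $E_{R}(R/\q)\notin\FG*\M$ and so requires no such translation for minimal primes.
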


\begin{proof}
(1) $\Rightarrow$ (2): 
(a)\, We may suppose $\dim R\geqq 1$ and let $\p$ be a prime ideal of $R$ with $\height\, \p=1$. 
By Theorem \ref{NS-M}, the prime ideal $\p$ belongs to $\Supp_{R}(\M)$. 
Then we can deduce that $R/\p$ is in $\M$. (Also see the proof of Theorem \ref{Characterization-Mel}.)
Since $\M$ is closed under taking injective hulls, the module $E_{R}(R/\p)$ is in $\M$. 

\noindent 
(b)\, We assume that $E_{R}(R/\q)$ is in $\FG * \M$ for all prime ideals $\q$ of $R$ with $\height\, \q=0$ and shall derive a contradiction. 
We claim that $\FG * \M$ contains all indecomposable injective $R$-modules. 
If we can take a prime ideal $\p$ of $R$ with $\height\, \p \geqq 1$, then there exists a prime ideal $\p '$ of $R$ with $\height\, \p' =1$ which is contained in $\p$. 
The condition (a) implies that $E_{R}(R/\p')$ is in $\M$. 
Since $R/\p'$ is a submodule of $E_{R}(R/\p')$ and there exists a surjective homomorphism from $R/\p'$ to $R/\p$, 
we deduce that $R/\p$ is also in $\M$. 
By the closedness of taking injective hulls for $\M$, we see that $E_{R}(R/\p)$ is in $\M$, whence in $\FG * \M$.  

Now, let $X$ be in $\FG * \M$.  
Then there exists a short exact sequence 
\[ 0 \to F \to X \to M \to 0\]
\noindent
of $R$-modules where $F$ is in $\FG$ and $M$ is in $\M$. 
It is easy to check that $E_{R}(F)=\overset{finite}{\oplus} E_{R}(R/\p)$ and $E_{R}(M)$ are in $\FG * \M$. 
Therefore, $E_{R}(X)$ is also in $\FG * \M$ because this module is a direct summand of $E_{R}(F) \oplus E_{R}(M)$. 
Consequently, $\FG * \M $ is closed under taking injective hulls. 
However, this contradicts to the assumption (1). 

\noindent 
(2) $\Rightarrow$ (1): 
The condition (b) deduces that $\FG* \M$ is not closed under talking injective hulls. 
Meanwhile, if we have $\dim R \geqq 1$ and there exists a prime ideal $\p$ of $R$ with $\height\, \p=1$, 
then the condition (a) implies $\p \in \Supp_{R}\left( E_{R}(R/\p) \right) \subseteqq \Supp_{R}(\M)$.  
Therefore it follows from Theorem \ref{NS-M} that $\FG * \M$ is a Melkersson subcategory. 
\end{proof}

\begin{remark}
(1)\, The condition (2)-(a) in Corollary \ref{MNCUI} is equivalent to one of the following conditions: 
\begin{enumerate}
\item[(i)]\, If there exists a prime ideal $\p$ of $R$ with $\height\, \p =1$, then $R/\p$ is in $\M$. 

\item[(ii)]\, If  there exists a prime ideal $\p$ of $R$ with $\height\, \p \geqq 1$, then $E_{R}(R/\p)$ is in $\M$. 
\item[(iii)]\, If there exists a prime ideal $\p$ of $R$ with $\height\, \p \geqq 1$, then $R/\p$ is in $\M$. 
\end{enumerate}
We note that the implication (2)-(a) $\Rightarrow$ (ii) has already showed in the proof for Corollary \ref{MNCUI}. 

\noindent 
(2) In the condition (2) (b) of Corollary \ref{MNCUI}, we can not replace $\FG*\M$ by $\M$.  
Indeed, we suppose that $R$ is a $1$-dimensional local domain. 
Then $\FG*\AR$ satisfies the condition (2) (a) and $E_{R}(R)$ is not in $\M$. 
However, a subcategory $\FG*\AR$ is a Melkersson subcategory with the closedness of taking injective hulls by \cite[Theorem 3.5]{Y-2016}. 
\end{remark}

\vspace{10pt}
%
%
\section{On the structure of $\MP[\SE]$ over a $0$-dimensional ring and a $1$-dimensional local ring}
In this section, we will study forms of $\MP[\SE]$ for a Serre subcategory over a $0$-dimensional ring and a $1$-dimensional  local ring. 
%
%
%
%
%
 %
 %
First of all, we discuss in the case of $0$-dimensional ring. 
We have already seen $\MP[\SE]=\Spec(R)$ for any Serre subcategory $\SE$ over a $0$-dimensional ring $R$ in Remark \ref{MP-0-dim}.  
Here, let us prove that the converse implication holds.
 
\begin{theorem}
Let $R$ be a ring. 
Then the following conditions are equivalent: 
\begin{enumerate}
\item\, One has $\MP[\SE]=\Spec(R)$ for each Serre subcategory $\SE$;

\item\, $R$ has a $0$-dimension. 
\end{enumerate}
\end{theorem}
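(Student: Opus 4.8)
The plan is to treat the two implications separately, noting that essentially all of the content lies in $(1)\Rightarrow(2)$, since $(2)\Rightarrow(1)$ has already been recorded. For $(2)\Rightarrow(1)$ I would simply cite Remark \ref{MP-0-dim}: over a $0$-dimensional ring, Lemma \ref{lemma-SR} together with Lemma \ref{lemma-MP} (1) already force $\MP[\SE]=\Spec(R)$ for every Serre subcategory $\SE$.

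For $(1)\Rightarrow(2)$ I would argue by contraposition, so assume $\dim R\geqq 1$ and exhibit a single Serre subcategory witnessing $\MP[\SE]\neq\Spec(R)$. The natural choice is $\SE=\FG$ itself. The key input is the containment $\MP[\FG]\subseteqq\Min(R)$ from Remark \ref{MPcontained} (2), which is obtained by specializing the inclusion $\MP[\FG*\SE]\subseteqq\Min(R)\cup\Supp_{R}(\SE)$ — itself a consequence of Proposition \ref{NC-C_{p}} — to the zero subcategory. Once this is in hand the argument is immediate: since $\dim R\geqq 1$ there is a prime ideal $\p$ with $\height\,\p\geqq 1$, and such a $\p$ is never a minimal prime, so $\p\notin\Min(R)$ and hence $\p\notin\MP[\FG]$. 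Therefore $\MP[\FG]\neq\Spec(R)$, which contradicts $(1)$ and yields $\dim R=0$.

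I do not expect a genuine obstacle here, as the real work has been front-loaded into Proposition \ref{NC-C_{p}} and Lemma \ref{Mel-lemma}. The only point demanding care is that the witnessing containment $\MP[\FG]\subseteqq\Min(R)$ must hold over an arbitrary (not necessarily local) noetherian ring; this is precisely why I would route the argument through Remark \ref{MPcontained} (2) rather than through the local statement Lemma \ref{relationship-FG}, whose hypotheses would not be available in this generality. It is also worth remarking that the witness $\FG$ is canonical, being the smallest nonzero Serre subcategory containing $R$, so no cleverness in the choice of $\SE$ is required.
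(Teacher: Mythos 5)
Your proposal is correct, and the overall skeleton (contraposition via the single witness $\SE=\FG$, with $(2)\Rightarrow(1)$ dispatched by Remark \ref{MP-0-dim}) matches the paper; but the way you show $\FG$ fails to be a Melkersson subcategory is genuinely different. The paper argues directly and elementarily: it picks a maximal ideal $\m$ with $\height\,\m>0$ and exhibits the explicit counterexample module $E_{R}(R/\m)$, which is $\m$-torsion with $\bigl(0:_{E_{R}(R/\m)}\m\bigr)=R/\m\in\FG$ yet is not finitely generated, so $(C_{\m})$ fails for $\FG$. You instead route through Remark \ref{MPcontained} (2), i.e.\ the containment $\MP[\FG]\subseteqq\Min(R)$ obtained from Proposition \ref{NC-C_{p}} with $\SE=\{0\}$, which ultimately rests on Lemma \ref{Mel-lemma} (Grothendieck vanishing and non-vanishing, flat base change) and \cite[Theorem 2.9]{AM-2008}. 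Both arguments are valid over an arbitrary noetherian ring and there is no circularity, since Proposition \ref{NC-C_{p}} precedes this theorem; your version buys a stronger conclusion for free (every prime of positive height, not just a maximal one, lies outside $\MP[\FG]$) at the cost of invoking much heavier machinery, whereas the paper's version is self-contained and needs only the structure of $E_{R}(R/\m)$. Your cautionary remark about avoiding Lemma \ref{relationship-FG} (whose hypotheses are local) is well taken and correctly handled.
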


\begin{proof}
(1) $\Rightarrow$ (2): 
We assume that $R$ has a positive dimension. 
Then there exists a maximal ideal $\m$ of $R$ with $\height\, \m >0$. 
We note $\G_{\m}\left( E_{R}(R/\m) \right)=E_{R}(R/\m)$ and that $\left( 0:_{E_{R}(R/\m)} \m \right)=R/\m$ is in $\FG$. 
By our assumption, we can apply the Melkersson condition $(C_{\m})$ to $\FG$.  
Consequently, $E_{R}(R/\m)$ is in $\FG$. 
Then it is easy to see $\height\, \m=0$. 
However, this is a contradiction. 

\noindent 
(2) $\Rightarrow$ (1): By Lemma \ref{lemma-SR} and Lemma \ref{lemma-MP} (1). (Also see \cite[Corollary 2.13]{SR-2016}.) 
\end{proof}

%
%
The next purpose of this section is to investigate the structure of $\MP[\SE]$ over a $1$-dimensional local ring $R$. 
We have already known $\MP[\AR]=\Spec(R)$. 
Meanwhile, $\MP[\FG]$ has the following two forms.  

\begin{proposition}\label{MP[FG]-1-dim}
Let $R$ be a $1$-dimensional local ring and $\SE$ be a non-zero Serre subcategory. 
\begin{enumerate}
\item\, If $\Min(R)$ has a unique prime ideal $\p$ and $\SE$ does not contain $\AR$, 
then one has $\MP[\SE]=\{\p \}$. 
In particular, it holds $\MP[\FG]=\{\p\}$. 
 
\item\, If $\Min(R)$ has at least two prime ideals and $\SE$ is contained in $\FG$, then one has $\MP[\SE] =\emptyset$. 
In particular, it holds $\MP[\FG]=\emptyset$. 
\end{enumerate}
\end{proposition}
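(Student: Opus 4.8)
The plan is to reduce both parts to the three basic lemmas of Section~2 (Lemmas~\ref{lemma-MP}, \ref{relationship-AR}, and~\ref{relationship-FG}) after first recording the shape of $\Spec(R)$. Since $R$ is local of dimension one with maximal ideal $\m$, every prime is either minimal (of height $0$) or equal to $\m$: a prime $\q$ with $\height\, \q=1$ satisfies $\dim R/\q=0$ and is therefore maximal, hence $\q=\m$. Thus $\Spec(R)=\Min(R)\cup\{\m\}$, and computing $\MP[\SE]$ amounts to deciding membership of the minimal primes and of $\m$ separately.

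For part~(1), I would first invoke Lemma~\ref{lemma-MP}~(1): as $\Min(R)=\{\p\}$ is a singleton, $\p\in\MP[\SE]$. It then remains to exclude $\m$. Here the hypothesis that $\SE$ does not contain $\AR$ is precisely the negation of the conclusion of the local form of Lemma~\ref{relationship-AR}~(1), so its contrapositive gives $\m\notin\MP[\SE]$. Combined with $\Spec(R)=\{\p,\m\}$, this yields $\MP[\SE]=\{\p\}$. For the statement about $\FG$, I would note that since $\dim R=1>0$ the ring is not Artinian, so $E_{R}(R/\m)$ is an Artinian but non-finitely-generated module; hence $\AR$ is not contained in $\FG$, and part~(1) applies with $\SE=\FG$.

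For part~(2), the observation $\dim R-1=0$ does all the work: the hypothesis that $\Min(R)$ has at least two elements says exactly that $R$ has at least two primes $\p$ with $\height\, \p=\dim R-1$. Lemma~\ref{relationship-FG}~(2) then gives $\Spec(R)_{\geqq \dim R-1}\cap\MP[\SE]=\Spec(R)\cap\MP[\SE]=\emptyset$, since $\Spec(R)_{\geqq 0}=\Spec(R)$; that is, $\MP[\SE]=\emptyset$. Taking $\SE=\FG$, which is a non-zero Serre subcategory contained in $\FG$, gives the stated special case.

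Since the substantive work has already been carried out in the three basic lemmas, I do not expect a serious obstacle; the only point requiring care is the verification that in a one-dimensional local ring the unique prime of positive height is $\m$, so that $\Spec(R)=\Min(R)\cup\{\m\}$ and the lemma conclusions account for every prime. One should also confirm that the hypotheses of Lemma~\ref{relationship-FG} (non-zero $\SE\subseteqq\FG$ and $\dim R>0$) and of Lemma~\ref{relationship-AR} (non-zero $\SE$ over a local ring) are met, which they are under the stated assumptions.
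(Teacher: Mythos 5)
Your proposal is correct and follows essentially the same route as the paper, which proves part (1) by citing Lemma \ref{lemma-MP} (1) together with Lemma \ref{relationship-AR} (1) and part (2) by citing Lemma \ref{relationship-FG} (2); you merely spell out the details (the shape of $\Spec(R)$, the contrapositive of Lemma \ref{relationship-AR} (1), and the identity $\dim R-1=0$) that the paper leaves implicit.
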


\begin{proof}
(1)\, By Lemma \ref{lemma-MP} (1) and Lemma \ref{relationship-AR} (1). 

\noindent
(2)\, It follows from Lemma \ref{relationship-FG} (2). 
\end{proof}

In a $1$-dimensional local ring $R$, 
$\MP[\FG]$ and $\MP[\AR]$ suggest that $\MP[\SE]$ has a possibility of the following three forms: $\Spec(R)$, a set  $\{ \p \}$ for a minimal prime $\p$ of $R$, and the empty set.  
The following theorem is the main result of this section, 
which states the relationship between these three forms of $\MP[\SE]$ and a dimension of local ring.

\begin{theorem}\label{theorem-1-dimension}
Let $R$ be a local ring. 
\begin{enumerate}
\item\, If $R$ has a $1$-dimension, then $\MP[\SE]$ has one of the following forms for each Serre subcategory $\SE$:  
\begin{enumerate}
\item\, $\MP[\SE]=\Spec(R)$;  

\item\, $\MP[\SE]=\{\p \}$ for a minimal prime ideal $\p$ of $R$;
	
\item\, $\MP[\SE]=\emptyset$. 
\end{enumerate}

\item\, We suppose that $\MP[\SE]$ has only three forms in $(1)$. 
Then $R$ has a dimension at most one.    
\end{enumerate}
\end{theorem}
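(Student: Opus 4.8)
The plan is to treat the two implications separately, using the three basic lemmas of Section~2 together with the description of $\MP[\FG*\M]_{\geq 1}$ furnished by Theorem~\ref{Characterization-Mel}.

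For part~(1), let $(R,\m)$ be a $1$-dimensional local ring, so that $\Spec(R)_{\geq 1}=\{\m\}$ and every non-maximal prime is minimal. I would argue by cases on whether $\m\in\MP[\SE]$. If $\m\in\MP[\SE]$, then $\Spec(R)_{\geq 1}\subseteq\MP[\SE]$, and Lemma~\ref{lemma-MP}~(2) yields $\MP[\SE]=\Spec(R)$, which is form~(a). If $\m\notin\MP[\SE]$, then $\MP[\SE]\subseteq\Min(R)$; note that $\SE$ must then be non-zero, since the zero subcategory is Melkersson and so falls in the previous case. If $\MP[\SE]=\emptyset$ we are in form~(c). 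Otherwise $\MP[\SE]$ contains a minimal prime, and it cannot contain two distinct minimal primes: two such primes have height $\dim R-1=0$, so Lemma~\ref{relationship-AR}~(2) would force $\m\in\MP[\SE]$, contrary to assumption. Hence $\MP[\SE]=\{\p\}$ for a single minimal prime $\p$, which is form~(b). No individual step is hard here; the content is merely feeding $\dim R=1$ into Lemmas~\ref{lemma-MP}~(2) and~\ref{relationship-AR}~(2).

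For part~(2) I would prove the contrapositive: assuming $\dim R\geq 2$, I exhibit a Serre subcategory whose $\MP$ is none of the three forms. The test object is $\SE=\FG*\M_{\{\m\}}$, where $\M_{\{\m\}}=\{M\in\RMod\mid\Supp_{R}(M)\subseteq\{\m\}\}$ is the Serre subcategory attached by Gabriel to the closed set $\{\m\}$; it is closed under taking injective hulls and satisfies $\Supp_{R}(\M_{\{\m\}})=\{\m\}$. By Theorem~\ref{Characterization-Mel} this gives $\MP[\SE]_{\geq 1}=\Supp_{R}(\M_{\{\m\}})_{\geq 1}=\{\m\}$, because $\height\,\m=\dim R\geq 2\geq 1$. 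In particular $\m\in\MP[\SE]$ is a prime of positive height, which rules out form~(c), as $\MP[\SE]$ is non-empty, and form~(b), as $\m$ has positive height and so is not a minimal prime.

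It remains to rule out form~(a), i.e.\ to show $\FG*\M_{\{\m\}}$ is not Melkersson. For this I would produce a prime $\p$ with $\height\,\p=1$ and $\p\neq\m$: since $\dim R\geq 2$, the maximal ideal $\m$ is not contained in any minimal prime, so by prime avoidance there is $x\in\m$ lying in no minimal prime, and a prime $\p$ minimal over $(x)$ has $\height\,\p=1$ by Krull's principal ideal theorem while $\p\neq\m$ because $\height\,\m\geq 2$. This $\p$ is a height-$1$ prime outside $\Supp_{R}(\M_{\{\m\}})=\{\m\}$, so Theorem~\ref{NS-M} shows $\FG*\M_{\{\m\}}$ is not a Melkersson subcategory; hence $\MP[\SE]\neq\Spec(R)$, ruling out form~(a). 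I expect the construction in part~(2) to be the main obstacle: the test subcategory must be chosen so that $\MP$ simultaneously meets $\Spec(R)_{\geq 1}$ (to escape forms~(b) and~(c)) and stays proper (to escape form~(a)), and it is exactly the pairing of $\M_{\{\m\}}$ with a height-$1$ prime distinct from $\m$ that achieves both.
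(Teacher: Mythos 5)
Your proof is correct and follows essentially the same route as the paper: part (1) is the same case analysis via Lemma \ref{lemma-MP} (2) and Lemma \ref{relationship-AR} (2), and part (2) is the same strategy of using Theorem \ref{Characterization-Mel} to force $\m$ into $\MP[\FG*\M]$ for a subcategory $\M$ with $\Supp_R(\M)=\{\m\}$ and then showing the result is not Melkersson. The only cosmetic difference is that you take $\M=\M_{\{\m\}}$ and verify non-Melkerssonness inline from Theorem \ref{NS-M} (with the prime-avoidance construction of a height-one prime), whereas the paper takes $\M=\AR$ and cites Corollary \ref{Mel-Minimax-FSF}.
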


\begin{proof}
Let $\m$ be the maximal ideal of $R$. 

\noindent
(1)\, We suppose that $\MP[\SE]$ is not the forms of (b) or (c). 
Then we have to prove $\MP[\SE]=\Spec(R)$ for a non-zero Serre subcategory $\SE$. 
Noting that $\MP[\SE]$ satisfies at least one of the following two conditions: 
\begin{enumerate}
\item[(i)]\, $\MP[\SE]$ has the maximal ideal $\m$; 

\item[(ii)]\, $\MP[\SE]$ has at least two minimal prime ideals of $R$. 
\end{enumerate}
In the case of (i), it holds $\MP[\SE] \supseteqq \Max(R)=\Spec(R)_{\geqq 1}$. 
Lemma \ref{lemma-MP} (2) implies $\MP[\SE]=\Spec(R)$. 
Next, we consider the case of (ii). 
It follows from Lemma \ref{relationship-AR} (2) that $\m$ belongs to $\MP[\SE]$. 
This is the case of (i), and thus we can conclude $\MP[\SE]=\Spec(R)$ again.   

\noindent
(2)\, We assume that $R$ has a dimension at least two and shall derive a contradiction. 
By virtue of Theorem \ref{Characterization-Mel}, one has 
\[ \MP[ \FG * \AR ] \supseteqq \Supp_{R}(\AR)=\Max(R)=\{ \m \}.\] 
Since $R$ is a local ring with positive dimension, $\m$ is not a minimal prime ideal of $R$. 
Thus $\MP[ \FG *\AR ]$ has no forms of (b) or (c). 
Consequently, we have $\MP[\FG * \AR]=\Spec(R)$, that is, $\FG * \AR$ is a Melkersson subcategory. 
However, this conclusion contradicts to the implication (1) (a) $\Rightarrow$ (b) in Corollary \ref{Mel-Minimax-FSF}. 
\end{proof}

The above theorem does not guarantee the existence of ring which simultaneously has at least three Serre subcategories  $S_{1}$ with $\MP[\SE_{1}]=\Spec(R)$, $\SE_{2}$ with $\MP[\SE_{2}]=\{\p \}$ for $\p \in \Min(R)$, and $\SE_{3}$ with $\MP[\SE_{3}]=\emptyset$. 
In the rest of this section, we shall give an example of such a ring. 
We start to recall that Melkersson gave the following fact in \cite[Proposition 4.5]{M-2005}.

\begin{lemma}[Melkersson]\label{Mel-cof}
Let $R$ be a $1$-dimensional ring and $I$ be an ideal of $R$. 
An $R$-module $M$ with $\Supp_{R}(M) \subseteqq V(I)$ is $I$-cofinite if and only if $(0:_{M} I)$ is a finitely generated $R$-module. 
Furthermore, the subcategory consisting of $I$-cofinite $R$-modules is a Serre subcategory. 
\end{lemma}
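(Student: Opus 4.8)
The statement contains two assertions --- the displayed equivalence and the claim that the $I$-cofinite modules form a Serre subcategory --- and the plan is to prove the equivalence first and then read off the Serre property from it. The forward implication is immediate, since $\Hom_{R}(R/I,M)=(0:_{M}I)=\Ext^{0}_{R}(R/I,M)$, so an $I$-cofinite $M$ has $(0:_{M}I)$ finitely generated. For the converse I would assume $\Supp_{R}(M)\subseteqq V(I)$ and $(0:_{M}I)\in\FG$, and aim to show $\Ext^{i}_{R}(R/I,M)\in\FG$ for every $i$. Two preliminary observations organise everything. First, a finitely generated module supported in $V(I)$ is automatically $I$-cofinite, because $\Ext^{i}_{R}(R/I,-)$ of a finitely generated module over a noetherian ring is finitely generated; moreover, the long exact sequence of $\Ext_{R}(R/I,-)$ attached to a short exact sequence shows that if two of its three terms are $I$-cofinite then so is the third. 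Second, Lemma \ref{ideal-powers-lemma} applied to $\SE=\FG$ gives that $(0:_{M}I^{n})\in\FG$ for all $n$; since $\Supp_{R}(M)\subseteqq V(I)$ forces $\G_{I}(M)=M$, one checks that $\Ass_{R}(M)=\Ass_{R}(0:_{M}I)$, which is therefore a finite set.

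The core is to treat $\Ext^{i}_{R}(R/I,M)$ for $i\geqq 1$, and here the bound $\dim R\leqq 1$ enters decisively. Each $\Ext^{j}_{R}(R/I,M)$ is annihilated by $I$, hence is a module over the one-dimensional ring $R/I$, and I would invoke the standard low-dimensional criterion (valid precisely because $\dim R/I\leqq 1$) that reduces $I$-cofiniteness of an $I$-torsion module to finite generation of $\Hom_{R}(R/I,-)$ and $\Ext^{1}_{R}(R/I,-)$, leaving only $\Ext^{1}_{R}(R/I,M)\in\FG$ to establish. To this end I would split $M$ along the two strata of $V(I)$. Writing $\mathfrak{b}$ for the intersection of the finitely many maximal ideals in $\Ass_{R}(M)$, the submodule $\G_{\mathfrak{b}}(M)$ is $\mathfrak{b}$-torsion with finitely generated socle, so Melkersson's criterion for artinianness shows it is an Artinian module, and in this dimension its $\Ext$ groups are finitely generated, so it is $I$-cofinite; the two-out-of-three principle then reduces the problem to the quotient $Q=M/\G_{\mathfrak{b}}(M)$. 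For $Q$ I would localise at each of the finitely many height-zero primes $\p\in V(I)$ with $\dim R/\p=1$, where $IR_{\p}=\p R_{\p}$, so that the finitely generated socle forces $Q_{\p}$ to have finite length; the aim is to combine this finiteness along the one-dimensional components with the finiteness of $\Ass_{R}(Q)$ to conclude that $\Ext^{1}_{R}(R/I,Q)$ is finitely generated, whence $M$ is $I$-cofinite.

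The hard part is precisely this last reconciliation. Because $M$ need not be finitely generated and $\C_{I-cof.}$ is in general only closed under extensions --- not under submodules or quotients --- one must fit together the Artinian behaviour of $M$ at the maximal ideals of $V(I)$ with its generically finite behaviour along the one-dimensional components, and argue that no intermediate contribution survives to spoil finite generation of the higher $\Ext$ groups. It is exactly here that $\dim R\leqq 1$ is indispensable: it guarantees that $V(I)$ has no stratum of dimension between these two, so the socle condition controls $M$ completely.

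Finally, granting the equivalence, the Serre property follows formally. Closure under submodules is clear, since for $N\subseteqq M$ the socle $(0:_{N}I)$ embeds in the finitely generated $(0:_{M}I)$ and is therefore finitely generated. Closure under quotients uses the long exact sequence: for a surjection $M\twoheadrightarrow M''$ with kernel $N$, the module $N$ is $I$-cofinite by the submodule case, so $\Ext^{1}_{R}(R/I,N)\in\FG$, and the exact sequence $(0:_{M}I)\to(0:_{M''}I)\to\Ext^{1}_{R}(R/I,N)$ exhibits $(0:_{M''}I)$ as finitely generated, so $M''$ is $I$-cofinite by the equivalence. Closure under extensions is the already-noted property of $\C_{I-cof.}$. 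Hence the $I$-cofinite $R$-modules form a Serre subcategory.
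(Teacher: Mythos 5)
The paper itself offers no proof of this lemma: it is quoted directly from Melkersson \cite[Proposition 4.5]{M-2005}, so there is no in-paper argument to compare yours against. Judged on its own, your outline follows essentially the same route as Melkersson's published proof --- split off the part of $M$ supported at maximal ideals as an Artinian $I$-cofinite piece, then control the remainder $Q=M/\G_{\mathfrak{b}}(M)$ through its localizations at the minimal primes in $V(I)$ --- and your preparatory steps are all correct: the forward implication, the two-out-of-three property of cofiniteness along a short exact sequence, $(0:_{M}I^{n})\in\FG$ via Lemma \ref{ideal-powers-lemma}, finiteness of $\Ass_{R}(M)$, and the Artinianness and $I$-cofiniteness of $\G_{\mathfrak{b}}(M)$ via Lemma \ref{lemma-M-1999}.

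The proposal is nonetheless incomplete, and you flag the gap yourself: the passage from ``$Q_{\p}$ has finite length for each minimal prime $\p$'' to ``$\Ext^{1}_{R}(R/I,Q)$ is finitely generated'' is announced as an aim and called the hard part, but never carried out --- and that step \emph{is} the content of Melkersson's proposition, not a routine verification. The standard way to close it avoids $\Ext^{1}$ entirely: for each of the finitely many minimal primes $\p_{1},\dots,\p_{r}$ in $\Ass_{R}(Q)$, the finite-length module $Q_{\p_{j}}$ is generated by finitely many fractions; let $N$ be the finitely generated submodule of $Q$ generated by all their numerators, so that $N_{\p_{j}}=Q_{\p_{j}}$ for every $j$. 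Then $N$ is $I$-cofinite (finitely generated with support in $V(I)$), the long exact sequence gives $(0:_{Q/N}I)\in\FG$ (using that $(0:_{Q}I)\in\FG$, itself obtained from the sequence defining $Q$), and $\Supp_{R}(Q/N)$ meets no minimal prime, hence consists of the finitely many maximal ideals in $\Supp_{R}\left((0:_{Q/N}I)\right)$. Thus $Q/N$ is a finite direct sum of $\m_{i}$-torsion modules with finitely generated socle, each Artinian with finite-length socle and therefore $I$-cofinite by Lemma \ref{lemma-M-1999}; extension-closedness then gives cofiniteness of $Q$ and finally of $M$. This exhibits $Q$ as a Minimax module, which is exactly how Melkersson argues. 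Note that once this is done, your appeal to the criterion ``cofinite iff $\Hom$ and $\Ext^{1}$ finitely generated when $\dim R/I\leqq1$'' becomes superfluous, which is just as well: that criterion (Bahmanpour--Naghipour) is a deeper theorem that postdates the statement being proved, so resting the proof on it is methodologically backwards even if not literally circular. The Serre-subcategory portion of your argument is fine once the equivalence is in place.
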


%
%
%
Using Lemma \ref{Mel-cof}, 
we can calculate $\MP\left[ \SE_{\p-cof.} \right]$ of the Serre subcategory $\SE_{\p-cof.}$ consisting of $\p$-cofinite $R$-modules  for a prime ideal $\p$ over a $1$-dimensional local ring $R$.

\begin{proposition}\label{1-dim-cof}
Let $R$ be a $1$-dimensional local ring. Then the following assertions hold. 
\begin{enumerate}
\item\, One has $\MP \left[ \SE_{\m-cof.} \right] =\Spec(R)$ for the maximal ideal $\m$ of $R$. 

\item\, One has $\MP \left[ \SE_{\p-cof.} \right]  =\{\p \}$ for a minimal prime $\p$ of $R$.  
\end{enumerate}
\end{proposition}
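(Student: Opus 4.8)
The plan is to handle the two parts separately, in both cases reducing $\p$-cofiniteness (respectively $\m$-cofiniteness) of a module $M$ to finite generation of the socle $(0:_M -)$ via Melkersson's Lemma \ref{Mel-cof}, and then reading off the two conclusions from the basic lemmas of Section 2. Throughout I use that, over the $1$-dimensional local ring $R$, the spectrum $\Spec(R)$ consists of the minimal primes together with $\m$, and that $\SE_{\m-cof.}$ and $\SE_{\p-cof.}$ are genuine (non-zero) Serre subcategories by Lemma \ref{Mel-cof}.

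For (1), I would argue that $\SE_{\m-cof.}$ is closed under taking injective hulls, so that $\MP[\SE_{\m-cof.}]=\Spec(R)$ follows at once from Remark \ref{remark-MP} (3). Given $M \in \SE_{\m-cof.}$, Lemma \ref{Mel-cof} says $\Supp_R(M) \subseteq \{\m\}$ and $(0:_M \m)$ is finitely generated, hence a finite-dimensional $R/\m$-vector space. Since $M$ is then $\m$-torsion, $E_R(M)$ is a direct sum of copies of $E_R(R/\m)$, and the number of copies equals $\dim_{R/\m}(0:_M \m)<\infty$; thus $E_R(M)$ is a finite direct sum of copies of $E_R(R/\m)$, which again has support $\{\m\}$ and finitely generated socle, and so lies in $\SE_{\m-cof.}$ by Lemma \ref{Mel-cof}. (Alternatively one can verify $(C_{\p})$ directly for $\p=\m$ and for each minimal prime, but the injective-hull route is shorter.)

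For (2), I would prove three facts. First, $\p \in \MP[\SE_{\p-cof.}]$: if $\G_{\p}(M)=M$ and $(0:_M \p)\in \SE_{\p-cof.}$, then $\Supp_R(M)\subseteq V(\p)$, while $(0:_{(0:_M \p)} \p)=(0:_M \p)$ is finitely generated, so $M$ is $\p$-cofinite by Lemma \ref{Mel-cof}. Second, $\m \notin \MP[\SE_{\p-cof.}]$: the subcategory $\SE_{\p-cof.}$, which contains $R/\p$, does not contain $\AR$, because $(0:_{E_R(R/\m)} \p) \cong \Hom_R(R/\p, E_R(R/\m)) \cong E_{R/\p}(R/\m)$ is not finitely generated over the $1$-dimensional local domain $R/\p$ (such a ring has no non-zero finitely generated injective module, exactly as in the proof of Lemma \ref{relationship-FG}); hence $E_R(R/\m)\notin \SE_{\p-cof.}$, and the contrapositive of Lemma \ref{relationship-AR} (1) gives $\m \notin \MP[\SE_{\p-cof.}]$. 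Third, no minimal prime $\p'\neq \p$ lies in $\MP[\SE_{\p-cof.}]$: otherwise $\p$ and $\p'$ would be two distinct primes of height $\dim R-1=0$ lying in $\MP[\SE_{\p-cof.}]$, whence Lemma \ref{relationship-AR} (2) would force $\m \in \MP[\SE_{\p-cof.}]$, contradicting the second fact. These three facts together yield $\MP[\SE_{\p-cof.}]=\{\p\}$.

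The routine points — that the categories are non-zero Serre subcategories, the idempotence $(0:_{(0:_M \p)} \p)=(0:_M \p)$, and the injective-hull decomposition — are immediate. The one substantial step is the non-containment $\AR \not\subseteq \SE_{\p-cof.}$, i.e.\ that $E_R(R/\m)$ is not $\p$-cofinite; this hinges on the identification $\Hom_R(R/\p, E_R(R/\m)) \cong E_{R/\p}(R/\m)$ together with the impossibility of a non-zero finitely generated injective module over the $1$-dimensional local domain $R/\p$. I expect this to be the main obstacle; once it is established, Lemma \ref{relationship-AR} supplies the rest.
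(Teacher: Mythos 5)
Your proposal is correct. Part (2) follows the paper's proof essentially verbatim: the same three steps ($\p\in\MP[\SE_{\p-cof.}]$ via Lemma \ref{Mel-cof}, $\m\notin\MP[\SE_{\p-cof.}]$ via the identification $\Hom_{R}(R/\p,E_{R}(R/\m))\cong E_{R/\p}(R/\m)$ and the non-existence of non-zero finitely generated injectives over the $1$-dimensional local ring $R/\p$, and the exclusion of the remaining minimal primes via Lemma \ref{relationship-AR}), with the same key lemmas. Part (1) is where you genuinely diverge: the paper verifies the single condition $(C_{\m})$ directly --- from $\G_{\m}(M)=M$ and $(0:_{M}\m)\in\SE_{\m-cof.}$ it reads off that $(0:_{M}\m)$ is finitely generated and applies Lemma \ref{Mel-cof} --- and then concludes by Lemma \ref{lemma-MP}~(2). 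You instead prove the stronger statement that $\SE_{\m-cof.}$ is closed under taking injective hulls (using that an $\m$-torsion module with finite-dimensional socle has injective hull a \emph{finite} direct sum of copies of $E_{R}(R/\m)$, which is itself $\m$-cofinite by Lemma \ref{Mel-cof}) and then invoke Remark \ref{remark-MP}~(3). Both arguments are sound; the paper's is slightly more economical since it checks only one prime, while yours yields the extra structural information that $\SE_{\m-cof.}$ is injective-hull-closed, which is more than the proposition asks for but is a legitimate and correct route.
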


\begin{proof}
(1)\, By virtue of Lemma \ref{lemma-MP} (2), 
it is enough to show $\m \in \MP\left[ \SE_{\m-cof.} \right]$. 
We suppose $\G_{\m}(M)=M$ and $( 0 :_{M} \m)$ is in $\SE_{\m-cof.}$ for an $R$-module $M$. 
The equality $\G_{\m}(M)=M$ deduces $\Supp_{R}(M) \subseteqq V(\m)$.
Moreover, we see that $\left( 0:_{M} \m \right) =\left( 0:_{(0:_{M} \m)} \m \right) \cong \Hom_{R} \left( R/\m, (0 :_{M} \m) \right)$ is a finitely generated $R$-module by the definition of $\SE_{\m-cof.}$. 
Consequently, Lemma \ref{Mel-cof} implies that $M$ is $\m$-cofinite, and thus $M$ is in $\SE_{\m-cof.}$. 
In conclusion, we obtain $\m \in \MP\left[ \SE_{\m-cof.} \right]$. 

\noindent
(2)\, We fix a minimal prime ideal $\p$ of $R$. 
First of all, we observe $\m \not\in \MP\left[\SE_{\p-cof} \right]$. 
By \cite[10.1.15 Lemma]{BS}, 
there exists isomorphisms 
\[ \Hom_{R}\left( R/\p, E_{R}(R/\m) \right) \cong \left( 0 :_{E_{R}(R/\m)} \p \right) \cong E_{R/\p}(R/\m)\] 
of $R$-modules. 
Since $R/\p$ is a $1$-dimensional local ring, 
there does not exist a non-zero finitely generated injective $R/\p$-module. 
Therefore $\Hom_{R}\left( R/\p, E_{R}(R/\m) \right)$ is not a finitely generated $R/\p$-module, and thus it is not a finitely generated as an $R$-module. 
Hence $E_{R}(R/\m)$ dose not satisfy the definition of $\p$-cofinite. 
Consequently, we see that $\AR$ is not contained in $\SE_{\p-cof.}$. 
Lemma \ref{relationship-AR} (1) deduces that $\m$ does not belong to $\MP\left[\SE_{\p-cof} \right]$. 

Next, we shall show $\p \in \MP\left[ \SE_{\p-cof.} \right]$. 
For an $R$-module $M$, we suppose that $\G_{\p}(M)=M$ and $(0:_{M} \p )$ is in $\SE_{\p-cof.}$. 
It follows from the definition of $\SE_{\p-cof.}$ that $\left( 0:_{M} \p \right) =\left( 0:_{(0:_{M} \p)} \p \right) \cong 
\Hom_{R} \left( R/\p, (0:_{M} \p) \right)$ is a finitely generated $R$-module. 
Moreover, $\G_{\p}(M)=M$ deduces $\Supp_{R}(M)\subseteqq V(\p)$. 
Lemma \ref{Mel-cof} deduces that $M$ is a $\p$-cofinite $R$-module. 
Namely, the module $M$ is in $\SE_{\p-cof.}$. 
Consequently, we obtain $\p \in \MP \left[ \SE_{\p-cof.} \right]$. 

Finally, we  can conclude $ \MP \left[ \SE_{\p-cof.} \right]=\{ \p \}$  by Lemma \ref{relationship-AR} (2). 
\end{proof}

\begin{remark}
In the last part of above proof for (2), 
we can directly prove $\q \not \in \MP \left[ \SE_{\p-cof.} \right]$ if there exists $\q \in \Min(R) \backslash \{\p \}$. 
Indeed, we can check the following three conditions hold: 
(a) $\G_{\q}\left( E_{R}(R/\m) \right) =E_{R}(R/\m)$, 
(b) $(0:_{E_{R}(R/\m)} \q )$ is in $S_{\p-cof.}$, but 
(c) $E_{R}(R/\m)$ is not in $S_{\p-cof.}$. 
We note that the condition (b) is shown by  using Lemma \ref{ideal-powers-lemma} and Lemma \ref{Mel-cof}. 
\end{remark}

Now we obtain the following example which is one of purposes of this section. 
\begin{example}
Let $R$ be a $1$-dimensional local ring with at least two minimal prime ideals.
Then the following hold. 
\begin{enumerate}
\item[(a)]\, $\MP[\AR]=\Spec(R)$ by Remark \ref{remark-MP} (3). 

\item[(b)]\, $\MP\left[ \SE_{\p-cof.} \right]=\{ \p \}$ for each minimal prime ideal $\p$ of $R$ by Proposition \ref{1-dim-cof} (2). 

\item[(c)]\, $\MP[\FG]=\emptyset$ by Proposition \ref{MP[FG]-1-dim} (2).
\end{enumerate}
\end{example}

\vspace{10pt}
%
%
\section{On the structure of $\MP[\SE]$ over a $2$-dimensional local domain}

The aim of this section is to investigate $\MP[\SE]$ for a Serre subcategory $\SE$ over a $2$-dimensional local domain as an analogue of results in section 5. 
%
%
%
%
We recall the following result proved by Melkersson in \cite[Theorem 1$\cdot$6 and Corollary 1$\cdot$7]{M-1999}. 

\begin{lemma}[Melkersson]\label{lemma-M-1999}
Let $M$ be an Artinian $R$-module over a local ring $R$ and $I$ be a proper ideal of $R$. 
Then $M$ is an $I$-cofinite $R$-module if and only if $(0:_{M} I)$ has finite length. 
Furthermore, every submodule and quotient module of an Artinian $I$-cofinite $R$-module is again $I$-cofinite. 
\end{lemma}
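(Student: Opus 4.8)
The plan is to treat both equivalent conditions through the single cohomological invariant $(0:_M I)\cong\Hom_R(R/I,M)=\Ext^0_R(R/I,M)$ and to reduce everything to a finite-length question visible from the support. First I would dispose of the support half of the definition of $I$-cofiniteness for free: since $M$ is Artinian over the local ring $(R,\m)$, every cyclic submodule $Rx$ is Artinian and finitely generated, hence of finite length, so $\m^k x=0$ for some $k$; thus $\G_{\m}(M)=M$ and $\Supp_R(M)\subseteq\{\m\}\subseteq V(I)$ because $I$ is proper. Consequently $M$ is $I$-cofinite exactly when every $\Ext^i_R(R/I,M)$ is finitely generated. The easy direction is then immediate: if $M$ is $I$-cofinite, then $(0:_M I)=\Ext^0_R(R/I,M)$ is finitely generated, and being also a submodule of the Artinian module $M$ it is Artinian, so it has finite length.

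For the substantial converse I would pass to the completion and use Matlis duality. Write $E=E_R(R/\m)$ and $(-)^{\vee}=\Hom_R(-,E)$; then $N:=M^{\vee}$ is a finitely generated $\hat R$-module and $M\cong N^{\vee}$. Exactness of $(-)^{\vee}$ together with the Hom--tensor adjunction gives the $\Ext$--$\mathrm{Tor}$ duality
\[ \Ext^i_R(R/I,M)\cong\Hom_R\!\left(\mathrm{Tor}_i^R(R/I,N),E\right), \]
and flat base change along $R\to\hat R$ identifies $\mathrm{Tor}_i^R(R/I,N)$ with $\mathrm{Tor}_i^{\hat R}(\hat R/I\hat R,N)$, a finitely generated $\hat R$-module. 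In the same way the hypothesis translates as $(0:_M I)\cong(N/IN)^{\vee}$, so $(0:_M I)$ has finite length if and only if $N/IN$ does, that is, if and only if $\Supp_{\hat R}(N/IN)\subseteq\{\m\hat R\}$.

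Now I would exploit that $\Supp_{\hat R}\mathrm{Tor}_i^{\hat R}(\hat R/I\hat R,N)\subseteq\Supp_{\hat R}(N/IN)$ for every $i$: under the finite-length hypothesis each $\mathrm{Tor}_i^{\hat R}(\hat R/I\hat R,N)$ is a finitely generated $\hat R$-module with support contained in $\{\m\hat R\}$, hence of finite length. Dualizing back through the displayed isomorphism shows that every $\Ext^i_R(R/I,M)$ has finite length, in particular is finitely generated, so $M$ is $I$-cofinite. I expect this converse to be the main obstacle: the elementary argument controls only $\Ext^0$, and the real content is forcing finite generation of \emph{all} higher $\Ext$'s. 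This is precisely what Matlis duality achieves, by converting them into $\mathrm{Tor}$'s of a finitely generated module over the complete ring, where finiteness of length can be read off from the support.

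Finally, for the ``furthermore'' clause I would apply the criterion just established. A submodule $M'$ and a quotient $M/M'$ of an Artinian module are again Artinian, so it suffices to check the finite-length condition on $(0:_{(-)}I)$. For $M'$ this is clear, since $(0:_{M'}I)\subseteq(0:_M I)$ has finite length. For $M/M'$ I would apply $\Hom_R(R/I,-)$ to $0\to M'\to M\to M/M'\to 0$ to obtain the exact sequence
\[ (0:_M I)\to(0:_{M/M'}I)\to\Ext^1_R(R/I,M'); \]
the outer terms have finite length --- the left because $(0:_M I)$ does, and the right because $M'$ is $I$-cofinite, so by the computation above all of its $\Ext^i_R(R/I,M')$ have finite length --- whence $(0:_{M/M'}I)$ has finite length and $M/M'$ is $I$-cofinite.
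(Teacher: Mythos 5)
Your argument is correct and complete. Note, however, that the paper does not prove this lemma at all: it is quoted verbatim from Melkersson \cite[Theorem 1.6 and Corollary 1.7]{M-1999}, so there is no in-paper proof to compare against. Measured against Melkersson's original argument, your route is essentially the standard one: reduce the support condition to triviality (an Artinian module over a local ring is $\m$-torsion, hence supported at $\m\subseteq V(I)$), pass through Matlis duality to a finitely generated module $N$ over $\hat R$, convert $\Ext^i_R(R/I,M)$ into $\Hom_R(\mathrm{Tor}_i^{\hat R}(\hat R/I\hat R,N),E)$, and read off finite length of all the $\mathrm{Tor}$'s from $\Supp_{\hat R}\mathrm{Tor}_i^{\hat R}(\hat R/I\hat R,N)\subseteq\Supp_{\hat R}(N/IN)\subseteq\{\m\hat R\}$. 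Each step you invoke is valid: the $\Ext$--$\mathrm{Tor}$ duality holds because $E$ is injective, the flat base change for $\mathrm{Tor}$ is legitimate since $N$ is already an $\hat R$-module and $\hat R$ is $R$-flat, and the equivalence ``$(N/IN)^\vee$ has finite length $\Leftrightarrow$ $N/IN$ has finite length'' is correct for a finitely generated $\hat R$-module. The ``furthermore'' clause is also handled properly: the submodule case is immediate from the criterion, and the quotient case correctly uses the exact sequence $(0:_M I)\to(0:_{M/M'}I)\to\Ext^1_R(R/I,M')$ together with the fact, established en route, that all $\Ext^i_R(R/I,M')$ have finite length once $M'$ is Artinian and $I$-cofinite. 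This is a legitimate self-contained proof of the cited result.
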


For a prime ideal $\p$ of local ring $R$, 
the subcategory consisting of Artinian $\p$-cofinite $R$-modules is a Serre subcategory by the above lemma. 
Therefore, we denote this subcategory by $\SE_{A. \p-cof.}$, that is,  
\[ \SE_{A. \p-cof.}=\AR \cap \C_{\p-cof.}. \] 

\noindent
Here, we will investigate the structure of $\MP\left[ \SE_{A.\p-cof.} \right]$. 

\begin{proposition}\label{Artinian-cof}
Let $R$ be a local ring with $\dim R \geqq 1$ and $\p$ be a prime ideal of $R$ with $\height\, \p=\dim R-1$. 
Then one has 
\[ \MP\left[ \SE_{A. \p-cof.} \right]_{\geqq \dim R-1} =\{ \p \}. \]
\end{proposition}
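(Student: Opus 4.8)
We must show, for a local ring $R$ with $\dim R \geqq 1$ and a prime $\p$ with $\height\, \p = \dim R - 1$, that
\[ \MP\left[ \SE_{A.\p\text{-}cof.} \right]_{\geqq \dim R-1} = \{ \p \}. \]
Write $d = \dim R$ and $\SE = \SE_{A.\p\text{-}cof.} = \AR \cap \C_{\p\text{-}cof.}$. The plan is to prove two inclusions: first that $\p$ itself lies in $\MP[\SE]$ (so $\{\p\} \subseteqq \MP[\SE]_{\geqq d-1}$, since $\height\, \p = d-1$), and second that no other prime $\q$ of height $\geqq d-1$ can satisfy the Melkersson condition $(C_\q)$ for $\SE$.

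**Step 1: $\p \in \MP[\SE]$.** I would mimic the argument in Proposition \ref{1-dim-cof}. Suppose $\G_\p(M) = M$ and $(0 :_M \p) \in \SE$ for an $R$-module $M$. The hypothesis $\G_\p(M)=M$ gives $\Supp_R(M) \subseteqq V(\p)$. Since $(0:_M \p) \in \SE \subseteqq \AR$ is Artinian and $(0:_{(0:_M\p)} \p) = (0:_M \p)$ has finite length (being Artinian and $\p$-cofinite, apply Lemma \ref{lemma-M-1999}), I want to conclude $M$ is Artinian and $\p$-cofinite. The subtle point here, compared to the $1$-dimensional case, is that $\dim R/\p = 1$ but $R$ itself may have larger dimension; however, since $\Supp_R(M) \subseteqq V(\p)$, the module $M$ is really a module over $R/\p$, which is a $1$-dimensional local ring. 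So I would pass to $\bar R = R/\p$ and apply the $1$-dimensional machinery (Lemma \ref{Mel-cof} together with Lemma \ref{lemma-M-1999}) there. The finite length of $(0:_M \p)$ forces $M$ to be $\p$-cofinite by Lemma \ref{lemma-M-1999}, and Artinianness should follow since $M$ is $\p$-torsion with $(0:_M\p)$ Artinian. Thus $M \in \SE$ and $(C_\p)$ holds.

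**Step 2: no other $\q$ with $\height\, \q \geqq d-1$ lies in $\MP[\SE]$.** First, $\m \notin \MP[\SE]$: by Lemma \ref{relationship-AR} (1), if $\m \in \MP[\SE]$ then $\SE \supseteqq \AR$, which is false since $E_R(R/\m) \in \AR$ but, by the computation $\Hom_R(R/\p, E_R(R/\m)) \cong E_{R/\p}(R/\m)$ (as in the proof of Proposition \ref{1-dim-cof} (2)), $E_R(R/\m)$ is not $\p$-cofinite. This kills all $\q$ with $\height\, \q = d$. For a prime $\q \neq \p$ with $\height\, \q = d-1$, I would reproduce the direct argument from the Remark following Proposition \ref{1-dim-cof}: set $N = E_R(R/\m)$ and verify (a) $\G_\q(N) = N$, since $\Ass_R(N) = \{\m\} \subseteqq V(\q)$; (b) $(0:_N \q) \in \SE$; and (c) $N \notin \SE$. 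Condition (c) is exactly the non-$\p$-cofiniteness of $E_R(R/\m)$ already noted. For (b), since $R$ is local and $\height(\p+\q) \geqq \height\, \q + 1 = d$ forces $\sqrt{\p+\q} = \m$, one has $(0:_N \q) \subseteqq (0:_N \m^n)$ for some $n$; by Lemma \ref{ideal-powers-lemma} this is of finite length, hence Artinian and trivially $\p$-cofinite by Lemma \ref{lemma-M-1999}, so it lies in $\SE$. These three facts give $\q \notin \MP[\SE]$.

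**The main obstacle.** The delicate verification is Step 2(b) combined with confirming $\sqrt{\p+\q}=\m$ for every competing $\q$ of height $d-1$; here I must use that $R$ is a \emph{domain} so that $\p$ and $\q$ are incomparable primes of the same height $d-1$, whence $\p + \q$ is $\m$-primary. (In the $2$-dimensional case $d = 2$, these are all the height-$1$ primes, and the domain hypothesis makes $\sqrt{\p+\q}=\m$ automatic since the only prime properly containing two distinct height-$1$ primes is $\m$.) The other genuinely new point relative to section 5 is Step 1's reduction to $R/\p$, which requires care that $\p$-cofiniteness over $R$ for a $V(\p)$-supported module coincides with the relevant finiteness over the $1$-dimensional ring $R/\p$; I expect this to follow cleanly from Lemma \ref{lemma-M-1999} but it is where I would spend the most attention.
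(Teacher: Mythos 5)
Your overall skeleton matches the paper's: show $\p \in \MP[\SE_{A.\p-cof.}]$, exclude $\m$ via Lemma \ref{relationship-AR} (1) together with the non-$\p$-cofiniteness of $E_{R}(R/\m)$, and then exclude the remaining primes of height $\dim R-1$. But two of your detailed verifications rest on false claims. First, in Step 1 the proposed reduction to $R/\p$ does not work: $\G_{\p}(M)=M$ only says that $M$ is $\p$-torsion, not that $\p M=0$, so $M$ is in general not an $R/\p$-module (consider $R/\p^{2}$), and the reduction is in any case unnecessary. The paper's route is: since $\AR$ is closed under taking injective hulls, one has $\MP[\AR]=\Spec(R)$, so applying the Melkersson condition $(C_{\p})$ to $\AR$ shows that $M$ is Artinian; then $(0:_{M}\p)$, being $\p$-cofinite, equals $\Hom_{R}\left(R/\p,(0:_{M}\p)\right)$, which is finitely generated and hence of finite length, and Lemma \ref{lemma-M-1999} yields that $M$ is $\p$-cofinite. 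Note that Lemma \ref{lemma-M-1999} presupposes Artinianness, so that step must come first, not after, as in your ordering.

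Second, in Step 2(b) the containment $(0:_{N}\q)\subseteq (0:_{N}\m^{n})$ is false: it would require $\m^{n}\subseteq \q$, whereas $\height\,\q=\dim R-1<\height\,\m$. Indeed $(0:_{E_{R}(R/\m)}\q)\cong E_{R/\q}(R/\m)$ is not of finite length. What does have finite length is $(0:_{(0:_{N}\q)}\p)=(0:_{N}\p+\q)\subseteq (0:_{N}\m^{n})$, and one then invokes Lemma \ref{lemma-M-1999} to conclude that the Artinian module $(0:_{N}\q)$ is $\p$-cofinite, hence lies in $\SE_{A.\p-cof.}$. With that repair your direct counterexample works, but the paper dispenses with it entirely: once $\m\notin\MP[\SE_{A.\p-cof.}]$ is known, Lemma \ref{relationship-AR} (2) shows that $\MP[\SE_{A.\p-cof.}]$ cannot contain two primes of height $\dim R-1$, so no $\q\neq\p$ of that height can belong to it. Finally, your worry about needing $R$ to be a domain is unfounded: distinct primes of equal height are incomparable, so $\height(\p+\q)=\dim R$ and $\sqrt{\p+\q}=\m$ in any local ring, which is why the proposition is stated without that hypothesis.
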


\begin{proof}
First of all, we observe that the maximal ideal $\m$ of $R$ does not belong to $\MP \left[ \SE_{A. \p-cof} \right]$. 
We note that $\SE_{A.\p-cof.}$ contains $\FL$, and therefore it is a non-zero Serre subcategory. 
By Lemma \ref{relationship-AR} (1), we only have to show that $\SE_{A. \p-cof.}$ does not contain $\AR$. 
To prove this, we assume that $E_{R}(R/\m)$ is in $\SE_{A.\p-cof.}$ and shall derive a contradiction. 
The definition of $\p$-cofinite and \cite[10.1.15 Lemma]{BS} deduce that $E_{R/\p}(R/\m) \cong \left( 0:_{E_{R}(R/\m)} \p \right) \cong \Hom_{R}\left(R/\p, E_{R}(R/\m)\right)$ is finitely generated as an $R$-module, and thus as an $R/\p$-module.  
However, since $R/\p$ is a local ring with positive dimension, 
$R/\p$ does not have a non-zero finitely generated injective $R/\p$-module. 
This is a contradiction. 

Secondly, we shall see $\p \in \MP\left[ \SE_{A.\p-cof} \right]$. 
For an $R$-module $M$, we suppose that $\G_{\p}(M)=M$ and $(0:_{M} \p )$ is in $\SE_{A.\p-cof.}$. 
Since one has $\MP[\AR]=\Spec(R)$, we see that $M$ is an Artinian $R$-module by applying the  Melkersson condition $(C_{\p})$ to $\AR$.   
Meanwhile, by the definition of $\p$-cofinite for $(0:_{M} \p)$, 
the Artinian $R$-module $(0:_{M} \p)=\left(0:_{(0:_{M} \p)} \p \right)\cong \Hom_{R}\left(R/\p, (0:_{M} \p) \right)$ is finitely generated. 
Therefore, the module $(0:_{M} \p )$ has finite length. 
Lemma \ref{lemma-M-1999} deduce that $M$ is a $\p$-cofinite $R$-module, and therefore  $M$ is in $\SE_{A.\p-cof.}$. 
Consequently, we see that $\p$ belongs to $\MP\left[ \SE_{A. \p-cof.} \right]$. 

Finally, let  $\q$ be a prime ideal of $R$ with $\height\, \q=\dim R-1$ such that $\q \not =\p$. 
If we assume $\q \in \MP\left[ \SE_{A. \p-cof.} \right]$, 
then $\m$ has to belong $\MP\left[ \SE_{A. \p-cof.} \right]$ by Lemma \ref{relationship-AR} (2). 
However, this conclusion contradicts to our argument in the first part of proof. 
\end{proof}

\begin{corollary}\label{2-dim-Artinian-cof}
Let $R$ be a $2$-dimensional local ring with a unique minimal prime ideal $\q$ and $\p$ be a prime ideal of $R$ with $\height\, \p=1$. 
Then one has 
\[ \MP\left[ \SE_{A.\p-cof.} \right]=\{ \q \} \cup \{ \p \}.\] 
\end{corollary}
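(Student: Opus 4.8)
The plan is to combine the general-dimension result of Proposition~\ref{Artinian-cof} with the low-dimensional structural lemmas from section~2. We are in the situation $\dim R = 2$, so $\height\, \p = 1 = \dim R - 1$, and Proposition~\ref{Artinian-cof} applies directly to give
\[ \MP\left[ \SE_{A.\p-cof.} \right]_{\geqq 1} = \{ \p \}. \]
Thus the prime ideals of height at least one in $\MP\left[ \SE_{A.\p-cof.} \right]$ are completely pinned down: the only such member is $\p$ itself, and the maximal ideal $\m$ (the unique height-$2$ prime) is excluded. All that remains is to determine the members of height $0$, i.e.\ to decide whether the unique minimal prime ideal $\q$ lies in $\MP\left[ \SE_{A.\p-cof.} \right]$.

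For the height-zero part I would invoke Lemma~\ref{lemma-MP}~(1). Since $R$ is a $2$-dimensional local \emph{domain}-type hypothesis is not quite what we have here (the corollary only assumes a unique minimal prime $\q$), but the relevant fact is precisely that $R$ has a unique minimal prime ideal. By Lemma~\ref{lemma-MP}~(1), when $R$ has a unique minimal prime ideal, $\MP[\SE]$ contains $\Min(R) = \{ \q \}$ for every Serre subcategory $\SE$; in particular $\q \in \MP\left[ \SE_{A.\p-cof.} \right]$. This handles the only height-$0$ prime and shows it must be included.

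Putting the two pieces together, I would write
\[ \MP\left[ \SE_{A.\p-cof.} \right] = \MP\left[ \SE_{A.\p-cof.} \right]_{\geqq 1} \cup \left( \MP\left[ \SE_{A.\p-cof.} \right] \cap \{ \q \} \right) = \{ \p \} \cup \{ \q \}, \]
using that $\Spec(R)$ is partitioned by height into $\{ \q \}$ (height $0$), the height-$1$ primes, and $\{ \m \}$ (height $2$), that Proposition~\ref{Artinian-cof} says the height-$\geqq 1$ part is exactly $\{ \p \}$, and that the height-$0$ part is exactly $\{ \q \}$ by the previous paragraph. I do not expect a serious obstacle here: the corollary is essentially a bookkeeping specialization of Proposition~\ref{Artinian-cof} to $\dim R = 2$, with the only genuine content beyond that proposition being the inclusion of the minimal prime, which is immediate from Lemma~\ref{lemma-MP}~(1) once one observes that the unique-minimal-prime hypothesis is in force. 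The mild care needed is simply to confirm that $\MP\left[ \SE_{A.\p-cof.} \right]_{\geqq 1}$ really does mean height $\geqq \dim R - 1 = 1$ in this setting, so that the proposition's conclusion covers all primes except the single minimal one.
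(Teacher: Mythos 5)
Your proposal is correct and matches the paper's proof exactly: the paper likewise deduces the corollary from Proposition \ref{Artinian-cof} (which with $\dim R=2$ gives $\MP\left[ \SE_{A.\p-cof.} \right]_{\geqq 1}=\{\p\}$) together with Lemma \ref{lemma-MP}~(1) (which supplies the unique minimal prime $\q$). Your extra bookkeeping about the height partition of $\Spec(R)$ is just an expanded version of the same two-citation argument.
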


\begin{proof}
It follows from Lemma \ref{lemma-MP} (1) and Proposition \ref{Artinian-cof}. 
\end{proof}

In a $2$-dimensional local domain, the structures of $\MP[\SE]$ are classified as follows. 

\begin{theorem}\label{theorem-2-dimension}
Let $R$ be a local ring with a unique minimal prime ideal $\q$.  
\begin{enumerate}
\item\, If $R$ has a $2$-dimension, then $\MP[\SE]$ has one of the following forms for each Serre subcategory $\SE$: 
\begin{enumerate}
\item[(a)]\, $\MP[\SE] =\{ \q \}\cup W$ for a specialization closed subset of $\Spec(R)$;  

\item[(b)]\, $\MP[\SE]=\{ \q \} \cup \{ \p \}$ for a prime ideal $\p$ of $R$ with $\height\, \p =1$.   
\end{enumerate}

\item\, We suppose that $\MP[\SE]$ has only two forms in $(1)$. 
Then $R$ has a dimension at most two. 
\end{enumerate}
\end{theorem}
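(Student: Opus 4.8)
The plan is to handle the two parts separately: part (1) as a finite case analysis over the possible heights, and part (2) by exhibiting an explicit counterexample drawn from an Artinian cofinite subcategory.

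For part (1), since $R$ is local of dimension $2$ with unique minimal prime $\q$, every prime ideal has height $0$ (only $\q$), height $1$, or height $2$ (only $\m$). First I would record that $\q \in \MP[\SE]$ always, by Lemma \ref{lemma-MP} (1), so that $\MP[\SE]=\{\q\}\cup S\cup T$, where $S$ collects the height-$1$ primes in $\MP[\SE]$ and $T\subseteq\{\m\}$ records whether $\m\in\MP[\SE]$. The whole argument then reduces to branching on $|S|$ and on $T$, and to checking that the resulting sets are specialization closed after removing $\q$.

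The branching goes as follows. If $|S|\geq 2$, then since $\dim R-1=1$, Lemma \ref{relationship-AR} (2) forces $\m\in\MP[\SE]$; here $W:=S\cup\{\m\}$ is specialization closed (each height-$1$ prime specializes only to $\m\in W$, and $\m$ only to itself), so $\MP[\SE]=\{\q\}\cup W$ is of form (a). If $|S|\leq 1$ and $\m\in\MP[\SE]$, then $W:=S\cup\{\m\}$ is again specialization closed and we land in form (a); likewise when $S=\emptyset$ and $\m\notin\MP[\SE]$, taking $W=\emptyset$. The only remaining possibility is $S=\{\p\}$ a single height-$1$ prime with $\m\notin\MP[\SE]$, which gives exactly $\MP[\SE]=\{\q,\p\}$, i.e.\ form (b). Thus every $\MP[\SE]$ falls into (a) or (b), and the crux is that a lone height-$1$ prime without $\m$ is the unique shape that is not $\{\q\}\cup(\text{specialization closed})$.

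For part (2) I would argue the contrapositive: assuming $\dim R=d\geq 3$, I will produce a Serre subcategory whose $\MP$ is neither (a) nor (b). Choose a prime $\p$ with $\height\,\p=d-1\geq 2$ (such $\p$ exists as the second-to-top term of a saturated chain $\q=\p_{0}\subsetneq\cdots\subsetneq\p_{d}=\m$, which realizes $\height\,\m=\dim R$), and set $\SE=\SE_{A.\p-cof.}$. By Proposition \ref{Artinian-cof}, $\MP[\SE_{A.\p-cof.}]_{\geq d-1}=\{\p\}$, so $\p\in\MP[\SE_{A.\p-cof.}]$ while $\m\notin\MP[\SE_{A.\p-cof.}]$ (as $\m$ has height $d\geq d-1$ and $\m\neq\p$). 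This breaks form (b), since $\p$ has height $d-1>1$, so $\MP[\SE_{A.\p-cof.}]$ cannot equal $\{\q,\p'\}$ with $\height\,\p'=1$. It also breaks form (a): were $\MP[\SE_{A.\p-cof.}]=\{\q\}\cup W$ with $W$ specialization closed, then $\p\neq\q$ gives $\p\in W$, and $\p\subsetneq\m$ forces $\m\in W\subseteq\MP[\SE_{A.\p-cof.}]$, contradicting $\m\notin\MP[\SE_{A.\p-cof.}]$. This contradiction yields $\dim R\leq 2$.

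The main obstacle is isolating the correct witnessing subcategory in part (2): one needs a Serre subcategory that places exactly one prime of height $\dim R-1\geq 2$ into $\MP$ while keeping $\m$ out, and the Artinian $\p$-cofinite subcategory together with Proposition \ref{Artinian-cof} packages precisely this behaviour. For part (1) the only care required is confirming that the case split is exhaustive and that each candidate $W$ (in particular the unions with $\{\m\}$) is genuinely specialization closed.
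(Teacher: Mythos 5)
Your proposal is correct and follows essentially the same route as the paper: part (1) is the same case analysis on the height-one primes and $\m$ in $\MP[\SE]$, using Lemma \ref{lemma-MP} (1) and Lemma \ref{relationship-AR} (2) and the observation that $W$ is specialization closed once $\m$ is forced in, and part (2) uses the same witness $\SE_{A.\p-cof.}$ with $\height\,\p=\dim R-1$ via Proposition \ref{Artinian-cof}. The only cosmetic difference is that you phrase (2) as a contrapositive from $\dim R\geqq 3$ while the paper reads off $\height\,\p\in\{0,1\}$ from the two allowed forms; the content is identical.
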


\begin{proof}
Let $\m$ be the maximal ideal of $R$. 

\noindent
(1)\, Lemma \ref{lemma-MP} (1) deduces that the unique minimal prime ideal $\q$ belongs to $\MP[\SE]$ for each Serre subcategory $\SE$. 
Noting that we have $\MP[\SE]=\{ \q \}=\{ \q \} \cup W$ if we take the empty set $\emptyset$ as $W$ in (a).  
We suppose that $\MP[\SE]$ is not the forms (b) or $\MP[\SE]=\{ \q \}$. 
Then we can see that $\m$ belongs to $\MP[\SE]$. 
Indeed, we only have to  show that this claim holds whenever $\MP[\SE]$ has at least two prime ideals with height one, and this has already shown in Lemma \ref{relationship-AR} (2). 
Here, we consider a set 
\[W=\{ \p \in \Spec(R)_{\geqq 1} \mid \SE \text{ satisfies the Melkersson condition $(C_{\p})$} \}.\] 
Since $\m$ belongs to $\MP[\SE]$, the set $W$ is a specialization closed subset of $\Spec(R)$. 
Consequently, we can obtain $\MP[\SE]=\{ \q \} \cup W$.

\noindent 
(2)\, If $\dim R=0$, then we have nothing to prove. 
We suppose $\dim R\geqq 1$ and let $\p$ be a prime ideal of $R$ with $\height\,  \p=\dim R-1$.  
Proposition \ref{Artinian-cof} yields 
\[\MP\left[ \SE_{A.\p-cof.} \right]_{\geqq \dim R-1}=\{ \p \}.\]  
If $\MP\left[ \SE_{A.\p-cof.} \right]$ has the form (a), then $W$ is the empty set because 
$\m$ does not belong to $\MP\left[ \SE_{A.\p-cof.} \right]$. 
Therefore, we have  $\MP\left[ \SE_{A.\p-cof.} \right]=\{ \q \}$. 
This equality implies that the prime ideal $\p$ is equal to $\q$, and thus it holds $\height\, \p =\height \q=0$.  
Meanwhile, if $\MP\left[ \SE_{A.\p-cof.} \right]$ has the form (b), then we see $\height\, \p=1$. 
Consequently, we can conclude $\dim R=\height \p +1 \leqq 2$.
\end{proof}

%
%
%
Additionally, we can see that it always occurs the two forms of $\MP[\SE]$ in Theorem \ref{theorem-2-dimension} over a $2$-dimensional local ring with a unique minimal prime ideal. 

\begin{example}
Let $R$ be a $2$-dimensional local ring with a unique minimal prime ideal $\q$.
Then the following hold. 
\begin{enumerate}
\item[(a)]\, $\MP[ \FG * \M_{W} ]=\{ \q \} \cup W$ for a specialization closed subset $W$ of $\Spec(R)$, which is given in Example \ref{example-MP}. 
In particular, we have $\MP[\FG]=\{ \q \}$. 

\item[(b)]\, $\MP\left[ \SE_{A. \p-cof.} \right]=\{ \q \} \cup \{ \p \}$ for each prime ideal $\p$ of $R$ with $\height\, \p=1$ by Corollary \ref{2-dim-Artinian-cof}. 
\end{enumerate}
\end{example}

\vspace{5pt}
%
%
The final of this section, we shall give a non-trivial example of Serre subcategory $\SE$ with $\MP[\SE]=\{ \q \}$ in the same situation of above example. 
Here, let us recall the following result showed by Melkersson in \cite[Corollary 4.4]{M-2005}.

\begin{lemma}[Melkersson]\label{lemma-I-c.M}
Let $I$ be an ideal of $R$. 
Then the subcategory consisting of Minimax $I$-cofinite $R$-modules is a Serre subcategory. 
\end{lemma}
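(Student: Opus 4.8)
The plan is to show directly that the subcategory $\C$ of minimax $I$-cofinite modules is closed under submodules, quotient modules, and extensions. Closure under extensions is immediate and costs nothing: the minimax modules form the Serre subcategory $\FG * \AR$ (Example \ref{examples of subcategory} (5)), and $\C_{I-cof.}$ is closed under extensions (Example \ref{examples of subcategory} (4)), so their intersection inherits this property. The real content is therefore closure under submodules and quotients, and the subtlety is precisely that $\C_{I-cof.}$ by itself is \emph{not} closed under these operations; the minimax hypothesis is exactly what repairs this.

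The key reduction I would use is to split a minimax $I$-cofinite module $M$ along its structure. Fix a short exact sequence $0 \to F \to M \to A \to 0$ with $F$ finitely generated and $A$ Artinian. Since $\Supp_R(M) \subseteqq V(I)$, both $F$ and $A$ are supported in $V(I)$. A finitely generated module supported in $V(I)$ is automatically $I$-cofinite (its $\Ext_R^i(R/I,-)$ are finitely generated), so $F$ lies in $\C_{I-cof.}$. Applying $\Hom_R(R/I,-)$ and reading the long exact sequence $\Ext^i_R(R/I,M) \to \Ext^i_R(R/I,A) \to \Ext^{i+1}_R(R/I,F)$, each $\Ext^i_R(R/I,A)$ is squeezed between two finitely generated modules, hence is itself finitely generated. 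Thus $A$ is an Artinian $I$-cofinite module, and I can now invoke the much stronger behaviour recorded in Lemma \ref{lemma-M-1999}: every submodule and quotient of an Artinian $I$-cofinite module is again $I$-cofinite.

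With this in hand, closure under submodules and quotients follows by splitting along $F$. For a submodule $N \subseteqq M$, set $F' = N \cap F$ and $A' = N/F'$; then $F'$ is finitely generated and supported in $V(I)$ (hence $I$-cofinite), while $A' \cong (N+F)/F$ embeds in $A$, so $A'$ is Artinian $I$-cofinite by Lemma \ref{lemma-M-1999}. The sequence $0 \to F' \to N \to A' \to 0$ together with closure of $\C_{I-cof.}$ under extensions gives that $N$ is $I$-cofinite; it is minimax because $\FG*\AR$ is a Serre subcategory, so $N \in \C$. For the quotient $L = M/N$, let $\bar F$ denote the image of $F$ in $L$: this is a finitely generated module supported in $V(I)$, and $L/\bar F \cong M/(F+N)$ is a quotient of $A$, hence Artinian $I$-cofinite by Lemma \ref{lemma-M-1999}; the same extension argument places $L$ in $\C$.

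The main obstacle is the middle step, the passage to the Artinian quotient: one must observe that although $I$-cofiniteness is not preserved under passing to sub- or quotient modules in general, the finitely generated part $F$ contributes only finitely generated $\Ext$-modules, so the long exact sequence transfers $I$-cofiniteness from $M$ to its Artinian quotient $A$, where the strong closure properties of Lemma \ref{lemma-M-1999} become available. Everything after that is a routine diagram chase using only that $\C_{I-cof.}$ is closed under extensions and that finitely generated modules supported on $V(I)$ are $I$-cofinite.
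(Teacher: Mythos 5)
The paper does not actually prove this lemma: it is quoted verbatim from Melkersson \cite[Corollary 4.4]{M-2005}, so there is no in-paper argument to compare against. Your reconstruction is essentially the standard d\'evissage behind Melkersson's result, and it is sound: fixing $0\to F\to M\to A\to 0$ with $F$ finitely generated and $A$ Artinian, observing that a finitely generated module supported in $V(I)$ is automatically $I$-cofinite, transferring $I$-cofiniteness to $A$ via the long exact sequence of $\Ext_R^{i}(R/I,-)$ (each $\Ext_R^{i}(R/I,A)$ is an extension of a quotient of $\Ext_R^{i}(R/I,M)$ by a submodule of $\Ext_R^{i+1}(R/I,F)$, hence finitely generated), and then splitting an arbitrary submodule or quotient of $M$ along $F$ so that one end is finitely generated and the other is a subquotient of the Artinian $I$-cofinite module $A$. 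Closure under extensions is indeed free from Example \ref{examples of subcategory} (4), (5). This gives the reader a self-contained proof where the paper offers only a citation, which is a genuine gain.

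One point needs repair before the argument is complete as written. You invoke Lemma \ref{lemma-M-1999} for the statement that subquotients of an Artinian $I$-cofinite module are $I$-cofinite, but that lemma is stated in the paper only for a \emph{local} ring $R$, whereas Lemma \ref{lemma-I-c.M} is asserted for an arbitrary noetherian ring. This is not a fatal flaw, but you must bridge it: an Artinian module $A$ over any noetherian ring has finite support contained in $\Max(R)$ and decomposes canonically as $A=\bigoplus_{\m\in\Supp_R(A)}\G_{\m}(A)$, each summand being an Artinian module over $R_{\m}$ whose submodules and quotients as an $R$-module coincide with those as an $R_{\m}$-module, and $I$-cofiniteness over $R$ of such an $\m$-torsion module is equivalent to $IR_{\m}$-cofiniteness over $R_{\m}$; this reduces the needed closure property to the local case covered by Lemma \ref{lemma-M-1999}. (Alternatively, since every application of Lemma \ref{lemma-I-c.M} in the paper, namely Proposition \ref{2-dim-Minimax-cof}, takes place over a local ring, you could simply state and prove the lemma in the local setting.) With that one sentence added, your proof is correct.
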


For a prime ideal $\p$ of $R$, we denote the Serre subcategory consisting of Minimax $\p$-cofinite  $R$-modules by $\SE_{M. \p-cof.}$, that is, 
\[\SE_{M. \p-cof.}=(\FG * \AR) \cap \C_{\p-cof.}.\]

\vspace{5pt} \noindent
We note that a ring $R$ is not in $\SE_{M. \p-cof.}$ for a prime ideal $\p$ of $R$ with $\height\, \p=1$. 
Therefore, $\SE_{M.\p-cof.}$ and $\FG$ are distinct Serre subcategories. 
We shall show $\MP\left[ \SE_{M.\p-cof.} \right]=\{ \q \}$ over a $2$-dimensional local ring with a unique minimal prime ideal $\q$.

\begin{proposition}\label{2-dim-Minimax-cof}
Let $R$ be a local ring with $\dim R\geqq 2$ and $\p$ be a prime ideal of $R$ with $\height\, \p=\dim R-1$. 
Then one has  
\[ \MP\left[ \SE_{M.\p-cof.} \right]_{\geqq \dim R -1}=\emptyset.\]
\end{proposition}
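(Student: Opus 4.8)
The plan is to show that the Melkersson condition $(C_{\q})$ fails for $\SE_{M.\p-cof.}$ at every prime $\q$ with $\height\, \q \geqq \dim R-1$. Writing $d=\dim R$, these primes are exactly the maximal ideal $\m$ (of height $d$) together with the primes of height $d-1$; since $\dim R/\p=d-(d-1)=1$, the ring $R/\p$ is a $1$-dimensional local domain, a fact I would use repeatedly. I would split the primes $\q$ into the two cases $\q\neq\p$ and $\q=\p$, because the obvious Artinian test module settles the former but must be replaced by a local cohomology argument for the latter. Throughout I use that $\SE_{M.\p-cof.}=(\FG*\AR)\cap\C_{\p-cof.}$ is a non-zero Serre subcategory (Lemma \ref{lemma-I-c.M}) containing $\FL$.

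For a prime $\q\neq\p$ with $\height\, \q\geqq d-1$ I would test $(C_{\q})$ on the single module $M=E_R(R/\m)$. First I record that $\sqrt{\p+\q}=\m$: this is clear if $\q=\m$, and if $\height\, \q=d-1$ then $\q\not\subseteqq\p$ forces every prime over $\p+\q$ to contain $\p$ strictly, hence to equal $\m$. Now (a) $\Supp_R(E_R(R/\m))=\{\m\}\subseteqq V(\q)$ gives $\G_{\q}(E_R(R/\m))=E_R(R/\m)$; (b) by \cite[10.1.15 Lemma]{BS} one has $(0:_{E_R(R/\m)}\q)\cong E_{R/\q}(R/\m)$, which is Artinian, and since $\p$ acts on it through the $\m/\q$-primary ideal $(\p+\q)/\q$, its $\p$-socle has finite length, so Lemma \ref{lemma-M-1999} shows it is $\p$-cofinite, whence $(0:_{E_R(R/\m)}\q)\in\SE_{M.\p-cof.}$; (c) however $(0:_{E_R(R/\m)}\p)\cong E_{R/\p}(R/\m)$ has infinite length because $\dim R/\p=1>0$, so Lemma \ref{lemma-M-1999} shows $E_R(R/\m)$ is not $\p$-cofinite, i.e. $E_R(R/\m)\notin\SE_{M.\p-cof.}$. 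Conditions (a)--(c) give $\q\notin\MP[\SE_{M.\p-cof.}]$. Note that this computation with $\q=\p$ breaks down precisely at (b), since then $(0:_{E_R(R/\m)}\p)\cong E_{R/\p}(R/\m)$ is itself no longer in the subcategory.

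The hard part is the remaining prime $\q=\p$, and here I would argue by contradiction through local cohomology. Assume $\SE_{M.\p-cof.}$ satisfies $(C_{\p})$. Each $\Ext^i_R(R/\p,R)$ is finitely generated, hence Minimax, and is annihilated by $\p$, hence $\p$-cofinite, so it lies in $\SE_{M.\p-cof.}$; therefore \cite[Theorem 2.9]{AM-2008} yields $H^i_{\p}(R)\in\SE_{M.\p-cof.}$ for all $i$, in particular $H^{d-1}_{\p}(R)\in\SE_{M.\p-cof.}$. Being Minimax, $H^{d-1}_{\p}(R)$ localizes to a finitely generated module away from $\m$ (its Artinian part dies under localization at $\p\neq\m$), so $H^{d-1}_{\p R_{\p}}(R_{\p})\cong H^{d-1}_{\p}(R)\otimes_R R_{\p}$ is finitely generated over $R_{\p}$. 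But $\dim R_{\p}=\height\, \p=d-1\geqq 1$, and exactly the argument in the proof of Lemma \ref{Mel-lemma} (choosing a non-zerodivisor $x\in\p R_{\p}$ on $R_{\p}/\G_{\p R_{\p}}(R_{\p})$ and applying Nakayama's lemma to $H^{d-1}_{\p R_{\p}}(R_{\p})=xH^{d-1}_{\p R_{\p}}(R_{\p})$) shows that a finitely generated top local cohomology module $H^{d-1}_{\p R_{\p}}(R_{\p})$ must vanish, contradicting the Grothendieck non-vanishing theorem. Thus $\p\notin\MP[\SE_{M.\p-cof.}]$, and combining the two cases gives $\MP[\SE_{M.\p-cof.}]_{\geqq d-1}=\emptyset$.

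I expect the main obstacle to be the prime $\p$ itself: the Artinian module $E_R(R/\m)$ that disposes of all other large primes necessarily lies inside $\SE_{M.\p-cof.}$ when tested at $\p$, while the naive $\p$-torsion modules such as $E_R(R/\p)$ have a $\p$-socle $k(\p)$ that is too large to satisfy hypothesis (b). Routing through $H^{d-1}_{\p}(R)$ and localizing to detect the failure of the Minimax property at $R_{\p}$ is the device that resolves this, and verifying that each $\Ext^i_R(R/\p,R)$ genuinely lies in $\SE_{M.\p-cof.}$ (so that \cite[Theorem 2.9]{AM-2008} applies) is the step that must be checked with care.
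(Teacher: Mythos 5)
Your proof is correct and follows essentially the same route as the paper: the crucial prime $\p$ is eliminated exactly as in the paper (via $\Ext^{i}_{R}(R/\p,R)\in\SE_{M.\p-cof.}$, \cite[Theorem 2.9]{AM-2008}, and the finite-generation/Nakayama contradiction of Lemma \ref{Mel-lemma}), while for the remaining primes you merely substitute the test module $E_{R}(R/\m)$ for the paper's $\Hom_{R}(R/\p,E_{R}(R/\m))$, which lets you fold the case $\q=\m$ into the same computation instead of treating it separately through Lemma \ref{relationship-AR}. Both variants rest on the same two facts, namely $\sqrt{\p+\q}=\m$ and the non-finite-generation of $E_{R/\p}(R/\m)$ over the $1$-dimensional local ring $R/\p$, so the difference is cosmetic.
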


\begin{proof}
Let $\m$ be the maximal ideal of $R$. 
First of all, we see $\m \not \in \MP[\SE_{M. \p-cof.}]$ by the same reason in the proof of Proposition \ref{Artinian-cof}. 

Secondly, we claim $\p \not \in \MP\left[ \SE_{M.\p-cof.} \right]$. 
To prove our assertion, we assume that $\p$ belongs to $\MP\left[ \SE_{M.\p-cof.} \right]$ and shall derive a contradiction. 
For each integer $i$, it holds $\Supp_{R}\left( \Ext^{i}_{R}(R/\p, R) \right) \subseteqq V(\p)$ and  $\Ext^{i}_{R}(R/\p, R)$ is in $\FG$. 
Therefore $\Ext^{i}_{R}(R/\p, R)$ is in $\SE_{M.\p-cof.}$ for all integer $i$. 
It follows from \cite[Theorem 2.9.]{AM-2008} that a local cohomology module $H^{\height \p}_{\p}(R)$ is in  $\SE_{M. \p-cof.}$. 
In particular, the module $H^{\height \p}_{\p}(R)$ is also in $\FG * \AR$.  
Lemma \ref{Mel-lemma} deduces $\p \in \Supp_{R}(\AR)=\{\m\}$. 
However, this is a contradiction.

Finally, we observe $\q \not \in \MP\left[ \SE_{M.\p-cof.} \right]$ for a prime ideal $\q$ of $R$ with  $\height\, \q=\dim R-1$ such that $\q \not = \p$. 
We set $X=\Hom_{R}(R/\p, E_{R}(R/\m))$. 
Then we shall see the following conditions hold: 
(a) One has $\G_{\q}(X)=X$; 
(b) The $R$-module $( 0:_{X} \q )$ is in $\SE_{M. \p-cof.}$;  
(c) The $R$-module $X$ is not in  $\SE_{M. \p-cof.}$. 

\noindent
(a): Since one has $\Ass_{R}(X)=V(\p) \cap \Ass_{R}\left( E_{R}(R/\m) \right)=\{\m \} \subseteqq V(\q)$, we have $\G_{\q}(X)=X$.  

\noindent
(b): Since it holds $\sqrt{\p+\q}=\m$, we can prove that $(0:_{X} \q)$ has finite length by the same argument  in the part  (2) (b) of proof for Lemma \ref{relationship-FG}.  
Consequently, it is easy to see that this module is in $\SE_{M.\p-cof.}$. 

\noindent
(c): By \cite[10.1.15 Lemma]{BS}, there exists isomorphisms 
\[\Hom_{R}(R/\p, X) \cong ( 0:_{X} \p ) =\left( 0:_{E_{R}(R/\m)} \p \right) \cong E_{R/\p}(R/\m).\] 
Since $R/\p$ is a $1$-dimensional local ring, $R/\p$ does not have a non-zero finitely generated injective $R/\p$-module. 
The above isomorphisms yield that $\Hom_{R}(R/\p, X)$ is not finitely generated as an $R/\p$-module, and thus as an $R$-module. 
This means that $X$ does not satisfy the definition of $\p$-cofinite $R$-module. 
Consequently, we can conclude that $X$ is not in $\SE_{M. \p-cof.}$. 
\end{proof}


%

%
\vspace{7pt}
\bibliographystyle{amsplain}

\end{document}